\newtheorem{theorem}{Theorem}
\newtheorem{corollary}[theorem]{Corollary}
\newtheorem{lemma}[theorem]{Lemma}
\newtheorem{proposition}[theorem]{Proposition}
\newtheorem{remark}[theorem]{Remark}
\newenvironment{proof}[1][Proof]{\noindent\textbf{#1.} }{\ \rule{0.5em}{0.5em}}
\newcommand{\cp}{\text{Comp}} 
\newcommand{\iso}{\text{iso}} 
\newcommand{\bprecon}{bp-re\-con\-struc\-ti\-ble\xspace}
\newcommand{\pdeck}{poly\-no\-mi\-al-deck\xspace}
\newcommand{\sdeck}{$*$-deck\xspace}
\title{The Bipartition Polynomial of a Graph: Reconstruction, Decomposition, and
	Applications}
\author{Seongmin Ok \\
	Korea Institute for Advanced Study, Seoul
	\and Peter Tittmann\\
University of Applied Sciences Mittweida}
\begin{document}
\maketitle

\begin{abstract}
	The bipartition polynomial of a graph, introduced in \cite{Dod2015}, is a
	generalization of many other graph polynomials, including the domination, Ising, 
	matching, independence, cut, and Euler polynomial. We show in this paper that it
	is also a powerful tool for proving graph properties.
	In addition, we can show that the bipartition polynomial is polynomially reconstructible,
	which means that we can recover it from the multiset of bipartition polynomials
	of one-edge-deleted subgraphs. 
\end{abstract}

\section{Introduction}
The \emph{bipartition polynomial} $B(G;x,y,z)$ of a simple graph $G$ has been introduced 
in \cite{Dod2015}. The bipartition polynomial is related to the set of bipartite subgraphs 
of $G$; it generalizes the Ising polynomial \cite{Andren2009}, the matching polynomial
\cite{Farrell1979}, the independence polynomial (in case of regular graphs) 
\cite{Levit2005,Gutman1983}, the domination polynomial \cite{Arocha2000}, 
the Eulerian subgraph polynomial \cite{Aigner2007}, and the cut polynomial of a graph.
In this paper, we consider the natural generalization of the bipartition polynomial to 
graphs with parallel edges.

Let $G=(V,E)$ be a simple undirected graph with vertex set $V$ and edge set $E$. The
open neighborhood of a vertex $v$ of $G$ is denoted by $N(v)$ or $N_G(v)$. It is the
set of all vertices of $G$ that are adjacent to $v$. The closed neighborhood of $v$ is
defined by $N_G[v]=N_G(v)\cup \{v\}$. The neighborhood of a vertex subset $W \subseteq V$ is:
\begin{align*}
N_G(W) &= \bigcup_{w\in W}N_G(w) \setminus W, \\
N_G[W] &= N_G(W) \cup W.
\end{align*}
The \emph{edge boundary} $\partial W$ of a vertex subset $W$ of $G$ is
\[ \partial W = \{\{u,v \} \mid u\in W\text{ and }v\in V\setminus W \}, \]
i.e., the set of all edges of $G$ with exactly one end vertex in $W$. Throughout this
paper, we denote by $n$ the order, $n=|V|$, by $m$ the size, $m=|E|$,
and by $k(G)$ the number of components of $G$.

The bipartition polynomial of a graph $G$ is defined by
\begin{equation}
B(G;x,y,z)=\sum_{W\subseteq V} x^{|W|}\sum_{F\subseteq \partial W} y^{|N_{(V,F)}(W)|}
z^{|F|}.
\label{eq:bipart_def}
\end{equation}
Note that the definitions of neighborhood, edge boundary and Equation \eqref{eq:bipart_def} 
can be easily extended to graphs with parallel edges. From now on, unless otherwise stated, 
we allow graphs to have parallel edges. Note that adding loops does not change the bipartition polynomial.

\begin{figure}[ht]
	\centering
	\includegraphics[scale=0.8]{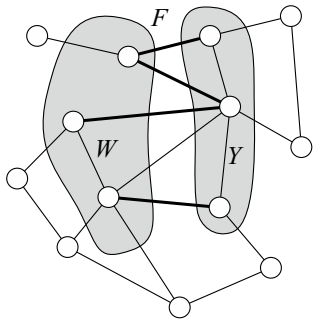}
	\caption{Illustration of the definition of the bipartition polynomial}
	\label{fig:bipart_def}
\end{figure}

Figure \ref{fig:bipart_def} provides an illustration of the definition given in Equation
(\ref{eq:bipart_def}). First we select a vertex subset $W$, which is located within the
left gray-shaded bubble in Figure \ref{fig:bipart_def}. The cardinality of the set $W$
is counted in the exponent of the variable $x$. The edge boundary $\partial W$ consists of all
edges that stick out from that bubble. Assume we select the edges shown in bold as subset
$F\subseteq \partial W$. The end vertices of these edges outside $W$ are presented within
the next bubble that is labeled with $Y$. The cardinality of the set $Y$ is counted in the 
exponent of variable $y$ of the bipartition polynomial. The third variable, $z$, counts the
edges in $F$. We see that by the definition of $F$
always a bipartite subgraph of $G$ is defined, which is the reason for the naming  
`bipartition polynomial'. If $H=(S\cup T,F)$ is a connected bipartite graph, then the partition
sets $S$ and $T$ are uniquely defined (up to order). 

Equation (\ref{eq:bipart_def}) implies that we can derive the order and size of a graph
from its bipartition polynomial:
\begin{eqnarray}
	 n = \deg B(G;x,1,1) \label{eq:n} \\ 
	 m = \frac{1}{2}[xyz]B(G;x,y,z), \label{eq:m}
\end{eqnarray}
where $[xyz]B(G;x,y,z)$ denotes the coefficient of $xyz$ in $B$. 
\begin{proposition}\label{prop:bipart}
	A loopless graph $G$ is bipartite if and only if
	\[ \frac{1}{2}[xyz]B(G;x,y,z) = \deg B(G;1,1,z). \]
\end{proposition}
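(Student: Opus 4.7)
The plan is to translate both sides of the claimed equality into standard graph parameters and then invoke the classical cut characterization of bipartite graphs.

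First I would identify the left-hand side using equation (\ref{eq:m}): $\frac{1}{2}[xyz]B(G;x,y,z) = m$, the number of edges of $G$. This is already done in the excerpt and requires no new work.

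Next I would compute the right-hand side. Setting $x=y=1$ in the defining equation (\ref{eq:bipart_def}) gives
\[ B(G;1,1,z) = \sum_{W\subseteq V}\sum_{F\subseteq \partial W} z^{|F|} = \sum_{W\subseteq V}(1+z)^{|\partial W|}, \]
so $\deg B(G;1,1,z) = \max_{W\subseteq V}|\partial W|$. This maximum is precisely the max-cut of $G$. Hence the claimed identity reduces to the statement $m = \mathrm{maxcut}(G)$.

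Finally I would close the argument by observing that since $G$ is loopless, every edge of $G$ lies in $\partial W$ for some $W$ (for instance, taking $W$ to be one of its endpoints). Therefore $|\partial W|\le m$ with equality iff \emph{every} edge of $G$ crosses the cut $(W,V\setminus W)$, which in turn is equivalent to $G$ being bipartite with $(W,V\setminus W)$ as a bipartition. Combining these equivalences gives the desired biconditional.

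I do not expect any serious obstacle: the only subtle point is to remember that loops would spoil the forward direction (a loop contributes to $m$ but never to any $|\partial W|$), which is why the hypothesis that $G$ is loopless is needed and appears in the statement.
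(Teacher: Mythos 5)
Your proposal is correct and follows essentially the same route as the paper: identify the left-hand side as $m$ via Equation (\ref{eq:m}), observe that $\deg B(G;1,1,z)$ is the maximum of $|\partial W|$ over $W\subseteq V$, and use the fact that a loopless graph is bipartite exactly when some cut contains all edges. Your version is slightly more explicit (in particular, noting that $B(G;1,1,z)=\sum_{W}(1+z)^{|\partial W|}$ has nonnegative coefficients, so no cancellation can lower the degree), but the underlying argument is identical.
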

\begin{proof}
	The left-hand side is, according to Equation (\ref{eq:m}), the number of edges of $G$.
	A graph $G=(V,E)$ is bipartite if and only if there is a vertex subset $W\subseteq V$
	with $\partial W = E$. Equation (\ref{eq:bipart_def}) shows that in this case the 
	degree of $z$ in $B(G;x,y,z)$ is equal to $m$. 
\end{proof}

In the remaining part of this paper, we present different representation and decompositions 
of the bipartition polynomial (Section \ref{sect:decom}), derive relations to other
graph polynomials (Section \ref{sec:poly}), prove its polynomial reconstructibility
(Section \ref{sec:recon}), and provide some applications for proving graph properties
(Section \ref{sec:appl}).

\section{Representations and Decomposition}\label{sect:decom}

In this section, we provide some different representations of the bipartition polynomial
and decomposition formulae with respect to vertex and edge deletions.

\subsection{Representations of the Bipartition Polynomial}

\begin{theorem}[product representation, \cite{Dod2015}]\label{theo:prod_representation}
	The bipartition polynomial of a graph $G$ can be represented as
	\begin{equation}
		B(G;x,y,z) = \sum_{W\subseteq V} x^{|W|} \prod_{v\in N_{G}(W)}
		\left[y\left[(1+z)^{|\partial v \cap \partial W|}-1 \right]  +1 \right].
		\label{eq:prod_representation}
	\end{equation}
	The bipartition polynomial of a simple graph $G=(V,E)$ satisfies
	\begin{equation}
		B(G;x,y,z) = \sum_{W\subseteq V} x^{|W|} \prod_{v\in N_{G}(W)}
		\left[y\left[(1+z)^{|N_{G}(v)\cap W|}-1 \right]  +1 \right].
		\label{eq:prod_representation_simple}
	\end{equation}
\end{theorem}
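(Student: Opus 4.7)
The plan is to start from the definition in Equation \eqref{eq:bipart_def} and factor the inner sum over $F \subseteq \partial W$ as a product indexed by $N_G(W)$. The key combinatorial observation is that every edge in $\partial W$ has exactly one endpoint outside $W$, and that endpoint lies in $N_G(W)$. This gives a disjoint partition
\[
\partial W \;=\; \bigsqcup_{v\in N_G(W)} \bigl(\partial v \cap \partial W\bigr),
\]
so choosing $F\subseteq \partial W$ is the same as choosing, independently for each $v\in N_G(W)$, a subset $F_v\subseteq \partial v\cap \partial W$. One then reads off that $|F|=\sum_v |F_v|$ and, crucially, that $v\in N_{(V,F)}(W)$ if and only if $F_v\neq\emptyset$.

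With this independence, the inner sum factors as
\[
\sum_{F\subseteq \partial W} y^{|N_{(V,F)}(W)|} z^{|F|}
= \prod_{v\in N_G(W)} \left(\sum_{F_v\subseteq \partial v\cap \partial W} y^{[F_v\neq\emptyset]} z^{|F_v|}\right).
\]
I would then compute each local factor by separating the empty subset from the nonempty ones: writing $d_v=|\partial v\cap \partial W|$, the binomial theorem yields $\sum_{F_v\neq\emptyset} z^{|F_v|}=(1+z)^{d_v}-1$, so the local factor equals $1+y\bigl[(1+z)^{d_v}-1\bigr]$. Substituting back gives Equation \eqref{eq:prod_representation}.

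For Equation \eqref{eq:prod_representation_simple}, I would observe that in a simple graph every edge from $v$ to $W$ corresponds bijectively to a neighbor of $v$ inside $W$, so $|\partial v\cap \partial W| = |N_G(v)\cap W|$, and the formula follows by direct substitution.

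The only real obstacle is the bookkeeping around which $v$ contribute a factor of $y$: one must be careful that $N_G(W)$ indexes the product (these are the \emph{possible} outside endpoints) while the exponent of $y$ only counts those $v$ actually hit by $F$. Once the indicator $[F_v\neq\emptyset]$ is correctly placed, the rest of the computation is routine.
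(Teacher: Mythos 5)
Your proof is correct and is the standard argument for this identity: partition $\partial W$ according to the endpoint outside $W$, factor the inner sum over $F$ into independent local choices $F_v\subseteq\partial v\cap\partial W$, and evaluate each local factor as $1+y[(1+z)^{|\partial v\cap\partial W|}-1]$ via the binomial theorem, with the indicator $[F_v\neq\emptyset]$ correctly accounting for the exponent of $y$. The paper itself does not reproduce a proof (it cites \cite{Dod2015}), but your argument is exactly the one intended there, including the observation that $|\partial v\cap\partial W|=|N_G(v)\cap W|$ for simple graphs.
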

\begin{corollary}\label{coro:k(G)}
	The number of components of a graph $G$ is $\log_{2}B(G;1,1,-1)$.
\end{corollary}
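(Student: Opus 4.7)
The plan is to evaluate the product representation \eqref{eq:prod_representation} at the point $(x,y,z)=(1,1,-1)$ and identify which subsets $W\subseteq V$ contribute.

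First I would specialize $z=-1$. For every vertex $v\in N_G(W)$ there is at least one edge of $G$ from $v$ to $W$, so $|\partial v\cap\partial W|\geq 1$ and therefore $(1+z)^{|\partial v\cap \partial W|}=0$. The factor indexed by such a $v$ collapses to $y\cdot(0-1)+1 = 1-y$. Setting $y=1$ next makes every such factor vanish.

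Consequently, in the sum $\sum_{W\subseteq V} x^{|W|}\prod_{v\in N_G(W)}[\,\cdots\,]$ a term survives at $(y,z)=(1,-1)$ only when the product is empty, i.e., when $N_G(W)=\emptyset$. This is precisely the condition that no vertex outside $W$ has a neighbour in $W$, which means $W$ is a (possibly empty) union of connected components of $G$. Finally, setting $x=1$, each such $W$ contributes $1$, and the number of unions of components equals $2^{k(G)}$ since each of the $k(G)$ components can independently be included or excluded. Therefore
\[ B(G;1,1,-1) = 2^{k(G)}, \]
and taking $\log_2$ yields the claim.

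The only subtle point is handling the empty product correctly: when $W=\emptyset$ or, more generally, when $W$ is a union of components, the set $N_G(W)$ is empty and the product is $1$ by convention; this is exactly what guarantees that each of the $2^{k(G)}$ distinguished subsets contributes $1$ rather than $0$. No genuine obstacle is anticipated beyond this bookkeeping.
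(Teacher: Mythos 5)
Your proof is correct and follows essentially the same route as the paper: both evaluate the product representation at $(x,y,z)=(1,1,-1)$, observe that every factor indexed by $v\in N_G(W)$ vanishes, and conclude that only the $2^{k(G)}$ unions of components contribute. Your explicit handling of the empty product is a nice touch but does not change the substance.
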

\begin{proof}
	From Equation (\ref{eq:prod_representation}), we obtain
	\[
	B(G;1,1,-1) = \sum_{W\subseteq V}\prod_{v\in N_{G}(W)}
	0^{\left\vert \partial v \cap \partial W\right\vert }.
	\]
	The product vanishes for all $W\subseteq V$ with $N_{G}(W)\neq\emptyset$,
	since $\partial v \cap \partial W\neq\emptyset$ for all $v\in N_{G}(W)$. The product
	equals 1 if $N_{G}(W)$ is empty. We have $N_{G}(W)=\emptyset$ if and only if
	$W$ is the (possibly empty) union of vertex sets of components of $G$. For a
	graph with $k$ components, there are $2^{k}$ ways to form a union of the
	vertex sets of the components. Hence we obtain $B(G;1,1,-1)=2^{k}$.
\end{proof}

The proof of the last proposition also yields the following statement.
\begin{corollary}\label{coro:comp_sizes}
	If $G$ consists of $k$ components $G_{1},...,G_{k}$ such that the order of
	$G_{i}$ is $k_{i}$, then%
	\[
	B(G;x,1,-1)=\prod\limits_{i=1}^{k}(1+x^{k_{i}}).
	\]
\end{corollary}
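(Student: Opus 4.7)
The plan is to reuse almost verbatim the reasoning from the proof of Corollary~\ref{coro:k(G)}, but keep track of the exponent of $x$ throughout instead of discarding it. Starting from the product representation \eqref{eq:prod_representation} and substituting $y=1$, $z=-1$, I get
\[
B(G;x,1,-1) = \sum_{W\subseteq V} x^{|W|}\prod_{v\in N_G(W)} 0^{|\partial v\cap \partial W|},
\]
so the only terms that survive are those $W\subseteq V$ with $N_G(W)=\emptyset$.

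The next step is the characterization of these $W$: a subset $W\subseteq V$ has empty open neighborhood in $G$ if and only if $W$ is a (possibly empty) union of the vertex sets of some of the components $G_1,\ldots,G_k$, since no edge may cross between $W$ and $V\setminus W$. This is exactly the fact isolated in the previous proof, so I can simply cite it.

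Hence the surviving sum is indexed by subsets $S\subseteq\{1,\ldots,k\}$: for each such $S$ we take $W=\bigcup_{i\in S}V(G_i)$, which has cardinality $\sum_{i\in S}k_i$. Therefore
\[
B(G;x,1,-1) = \sum_{S\subseteq\{1,\ldots,k\}} x^{\sum_{i\in S}k_i},
\]
and recognizing the right-hand side as the expansion of a product of binomials gives $\prod_{i=1}^{k}(1+x^{k_i})$.

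There is no real obstacle here; the corollary is essentially a bookkeeping refinement of Corollary~\ref{coro:k(G)}. The only thing to be slightly careful about is the empty-set case $S=\emptyset$, which contributes the constant $1$ to both sides (matching the $1$-term in each factor $1+x^{k_i}$), and to note that the argument does not need the components to be distinguishable beyond their orders $k_1,\ldots,k_k$, so parallel edges and the simple-graph form of the product representation make no difference.
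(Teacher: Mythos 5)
Your proof is correct and matches the paper's intent exactly: the paper simply remarks that the proof of Corollary~\ref{coro:k(G)} "also yields" this statement, and your argument is precisely that proof with the exponent of $x$ retained, so the surviving terms $x^{\sum_{i\in S}k_i}$ over $S\subseteq\{1,\ldots,k\}$ assemble into $\prod_{i=1}^k(1+x^{k_i})$. Nothing is missing.
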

Consequently, we can derive the order of all components of $G$ by the
following simple procedure. The order of the first component is the smallest
positive power, say $k_{1}$, of $x$ in $B(G;x,1,-1)$. Now divide $B(G;x,1,-1)
$ by $(1+x^{k_{1}})$ and proceed step by step with the resulting polynomial in
the same manner until you obtain the constant polynomial 1.

A connected bipartite graph with at least one edge is called \emph{proper}.
For any given graph $G$, we denote by $\cp(G)$ the set of proper components of $G$. 
As an abbreviation, we use $\cp(V,E)$ instead of $\cp((V,E))$.
The number of isolated vertices of a graph $G=(V,E)$ is denoted by $\iso(G)$ or
by $\iso(V,E)$.
\begin{theorem}[bipartite representation, \cite{Dod2015}]\label{theo:bip_representation}
	The bipartition polynomial of a graph $G=(V,E)$ satisfies
	\begin{equation}
		B(G;x,y,z) = \hspace{-14pt}\sum_{\substack{F\subseteq E \\(V,F)\text{ is bipartite}}}
		\hspace{-20pt} z^{|F|}(1+x)^{\mathrm{iso}(V,F)} 
		\hspace{-20pt} \prod_{(S\cup T,A)\in\cp(V,F)}
		\hspace{-25pt}(x^{|S|}y^{|T|}+x^{|T|}y^{|S|}).
	\end{equation}  
\end{theorem}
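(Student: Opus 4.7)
The plan is to exchange the order of summation in the defining formula \eqref{eq:bipart_def}. Writing
\[ B(G;x,y,z) = \sum_{F\subseteq E} z^{|F|} \sum_{\substack{W\subseteq V \\ F\subseteq \partial W}} x^{|W|}\, y^{|N_{(V,F)}(W)|}, \]
the inner sum is nonempty precisely when every edge of $F$ can be oriented to cross $W$, i.e., when the endpoints of each edge of $F$ lie in different classes of some bipartition of $(V,F)$. This is equivalent to $(V,F)$ being bipartite, which is exactly the restriction appearing on the right-hand side of the claim.

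Next I would evaluate the inner sum componentwise on the spanning subgraph $(V,F)$. Because the condition $F\subseteq \partial W$ is a local constraint on each component, the sum factorizes over the components of $(V,F)$. For a proper component $(S\cup T,A)\in\cp(V,F)$, connectedness makes the bipartition classes $S$ and $T$ unique up to swap, and the requirement that every edge of $A$ crosses $W$ forces $W\cap(S\cup T)$ to be either $S$ or $T$. In the first case this contributes $x^{|S|}$ to $x^{|W|}$ and, since every vertex of $T$ is then a neighbor of some vertex of $S\subseteq W$ via an edge of $F$, contributes $y^{|T|}$ to $y^{|N_{(V,F)}(W)|}$; the symmetric choice yields $x^{|T|}y^{|S|}$. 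Summing gives the factor $x^{|S|}y^{|T|}+x^{|T|}y^{|S|}$. For each isolated vertex of $(V,F)$, inclusion in $W$ is completely free and contributes nothing to $y$ (there is no incident edge of $F$ and hence no forced neighbor), producing the overall factor $(1+x)^{\iso(V,F)}$.

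Multiplying these independent choices across the components, the inner sum equals exactly the product on the right-hand side of the stated formula, and summing over all bipartite $F\subseteq E$ (weighted by $z^{|F|}$) completes the argument. The only subtlety worth flagging---and really the only obstacle to a purely mechanical calculation---is that $N_{(V,F)}(W)$ by definition excludes $W$ itself, so that within each proper component the two bipartition classes contribute to the exponents of $x$ and $y$ exclusively and without double-counting; once this is verified the rest is routine bookkeeping.
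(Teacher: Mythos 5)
Your argument is correct and is the standard derivation: interchange the order of summation in the definition, observe that the constraint $F\subseteq\partial W$ is satisfiable exactly when $(V,F)$ is bipartite, and factor the inner sum over the components of $(V,F)$, using the uniqueness of the bipartition of a connected bipartite component to get the factor $x^{|S|}y^{|T|}+x^{|T|}y^{|S|}$ and the free choice at isolated vertices to get $(1+x)^{\mathrm{iso}(V,F)}$. The paper itself does not reprove this theorem (it cites \cite{Dod2015}), but your proof is complete, handles the parallel-edge case without change, and correctly flags the one point needing care, namely that $N_{(V,F)}(W)$ excludes $W$ so the two classes contribute to the exponents of $x$ and $y$ disjointly.
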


For another representation of the bipartition polynomial using so-called \emph{activity}, 
we assume that the edge set $E=\{e_1,\ldots,e_m\}$ of the graph $G=(V,E)$ is linearly 
ordered, that is
$e_1<e_2<\cdots <e_m$. Let $H$ be a \emph{spanning forest} of $G$, which is 
a forest $H=(V,F)$ with the same vertex set as $G$ and $F\subseteq E$. An edge 
$e\in E\setminus F$ is \emph{externally active} with respect to the forest $H$ if
it is the largest edge in a cycle of even length of $H+e$. We denote by 
$\mathrm{ext}(H)$ the number of externally active edges of $H$. Note that our definition 
of external activity is little different from that of Tutte \cite{Tutte1954}.
\begin{theorem}[forest representation, \cite{Dod2015}]\label{theo:forest_representation}
	The bipartition polynomial of a graph $G=(V,E)$ satisfies
	\begin{equation}
		\begin{array}{l}
			B(G;x,y,z) = {\displaystyle\sum_{\substack{H \text{ is spanning}\\ \text{ forest of }G }}
				\hspace{-10pt} (1+x)^{\mathrm{iso}(H)} z^{n-k(H)}
				(1+z)^{\mathrm{ext}(H)}}  \\
			{\displaystyle\hspace{70pt}\times \hspace{-10pt} \prod_{(S\cup T,A)\in\mathrm{Comp}(H)}
				\hspace{-25pt}(x^{|S|}y^{|T|}+x^{|T|}y^{|S|})}.
		\end{array}
	\end{equation}	
\end{theorem}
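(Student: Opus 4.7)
The plan is to deduce the forest representation from the bipartite representation (Theorem~\ref{theo:bip_representation}) by grouping bipartite edge subsets according to a canonically associated spanning forest, in the spirit of the matroid activity bijection for the Tutte polynomial. Given a bipartite edge subset $F' \subseteq E$, I define $H(F')=(V,F)$ to be the greedy spanning forest obtained by processing the edges of $F'$ in the fixed order $e_1<\cdots<e_m$ and accepting each edge precisely when it does not create a cycle with the previously accepted ones. The goal is then to rewrite the outer sum of Theorem~\ref{theo:bip_representation} as $\sum_{H}\sum_{F'\colon H(F')=H}$ and to evaluate the inner sum for each spanning forest $H$ of $G$.

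The heart of the argument is the following pair of claims. \textbf{Claim 1 (greedy inversion).} Writing $F'=F\cup S$ with $S\subseteq E\setminus F$, one has $H(F')=H$ if and only if every $e\in S$ is the largest edge of the unique cycle $C_H(e)$ in $H+e$. I would prove this by induction on the processing order, showing that the accepted set after processing $e_i$ equals $F\cap\{e_1,\ldots,e_i\}$; this reduces the accept/reject test for each edge to a local comparison against the $H$-path between its endpoints. \textbf{Claim 2 (bipartiteness test).} $(V,F\cup S)$ is bipartite if and only if every fundamental cycle $C_H(e)$, $e\in S$, has even length; this is standard, since the fundamental cycles relative to $H$ form an $\mathbb{F}_2$-basis of the cycle space of $(V,F\cup S)$ and cycle parity is additive under symmetric difference. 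Combining the two, the bipartite $F'$ with $H(F')=H$ correspond exactly to the $2^{\mathrm{ext}(H)}$ subsets $F\cup S$ with $S\subseteq\mathrm{Ext}(H)$, the externally active edge set of $H$ in the sense of the paper.

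For the final bookkeeping I would observe that every $e\in\mathrm{Ext}(H)$ has both endpoints in the same tree of $H$; thus enlarging $H$ by any $S\subseteq\mathrm{Ext}(H)$ preserves the components, the isolated vertices, and the unique bipartition of each tree component, so the factors $(1+x)^{\iso(\cdot)}$ and $\prod (x^{|S|}y^{|T|}+x^{|T|}y^{|S|})$ appearing in Theorem~\ref{theo:bip_representation} depend only on $H$, not on $S$. Writing $|F'|=|F|+|S|=(n-k(H))+|S|$ and summing $z^{|F'|}$ over $S\subseteq\mathrm{Ext}(H)$ produces the factor $z^{n-k(H)}(1+z)^{\mathrm{ext}(H)}$, which reassembles the stated forest representation.

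The main obstacle is Claim 1. The induction on the greedy procedure succeeds only because the accept/reject decision on each new edge of $F'$ can be localized to a test involving cycles of $F$ alone; any slippage in the inductive invariant ``the accepted set after step $i$ equals $F\cap\{e_1,\ldots,e_i\}$'' would let a rejected $S$-edge be inadvertently accepted and thereby change the fundamental-cycle structure against which external activity is measured. Once Claim 1 is in hand, Claim 2 is essentially a standard cycle-space computation, and the invariance of components and bipartitions under adding externally active edges is immediate.
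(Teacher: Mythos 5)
Your proof is correct. Note first that this paper never proves Theorem~\ref{theo:forest_representation}: it is quoted from \cite{Dod2015} without an argument (see the remark following Theorems 2--4), so there is no in-paper proof to compare against; your derivation from the bipartite representation of Theorem~\ref{theo:bip_representation} by fibering over the greedy spanning forest is the natural activity-type argument for such an expansion, and it stands on its own. All three ingredients check out. Claim 1 is the standard greedy characterization: the invariant that the accepted set after processing $e_1,\dots,e_i$ equals $F\cap\{e_1,\dots,e_i\}$ holds because an edge of $F$ can never close a cycle inside the forest $F$, an edge of $S$ that is maximal on its fundamental cycle closes a cycle against the already-accepted smaller edges of that cycle, and conversely a rejected edge closes a cycle using only accepted (hence smaller) edges of $F$, which forces it to be the maximum of $C_H(e)$; in particular an edge of $S$ joining two trees of $H$ would be accepted, so the condition correctly excludes it. Claim 2 is the usual $\mathbb{F}_2$ cycle-space parity argument and survives parallel edges (a digon is an even cycle), which matters since the paper allows multigraphs. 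Finally, since every externally active edge is a chord of a single tree of $H$, adding any $S\subseteq\mathrm{Ext}(H)$ changes neither the vertex sets of the components nor their unique bipartitions nor the set of isolated vertices, so the factors $(1+x)^{\mathrm{iso}(\cdot)}$ and $\prod(x^{|S|}y^{|T|}+x^{|T|}y^{|S|})$ are constant on each fiber, and the inner sum over $S$ contributes exactly $z^{n-k(H)}(1+z)^{\mathrm{ext}(H)}$. Two small points worth making explicit in a write-up: in this paper a spanning forest is \emph{any} acyclic edge subset on the full vertex set (not necessarily maximal), and every such $H=(V,F)$ has a nonempty fiber because $H(F)=H$; so your fibering really is a partition of the index set of Theorem~\ref{theo:bip_representation} indexed by all spanning forests.
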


\begin{remark}
	In \cite{Dod2015}, the Theorems 2,3, and 4 are proven for simple graphs only.
	However, the generalization to non-simple graphs is straightforward.
\end{remark}
We need also the following result, which is proven in \cite{Dod2015} too.
\begin{theorem}\label{theo:bip_multiplicative}
	Let $G$ be a graph consisting of $k$ components $G_1,\ldots,G_k$. Then
	\[ 
		B(G;x,y,z) = \prod_{i=1}^{k} B(G_i;x,y,z).
	\]
\end{theorem}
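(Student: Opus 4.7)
The cleanest route is to unfold the definition in Equation~\eqref{eq:bipart_def} and exploit the fact that a disjoint-union structure percolates through every ingredient of the defining sum. Write $V = V_1 \sqcup \cdots \sqcup V_k$ and $E = E_1 \sqcup \cdots \sqcup E_k$ where $G_i = (V_i, E_i)$, and note that every edge of $G$ has both endpoints in the same $V_i$, since the $G_i$ are components. The plan is simply to check that each object appearing in \eqref{eq:bipart_def} is a disjoint union of its component-wise pieces, and then to rearrange the sum as a product.

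First I would establish the bijection $W \leftrightarrow (W_1, \ldots, W_k)$ between subsets $W \subseteq V$ and tuples with $W_i = W \cap V_i \subseteq V_i$, which immediately gives $x^{|W|} = \prod_{i=1}^k x^{|W_i|}$. Next I would verify that $\partial W = \bigsqcup_{i=1}^k \partial_{G_i} W_i$: since no edge crosses between different $V_i$'s, an edge has one endpoint in $W$ and one in $V \setminus W$ if and only if it lies in some $G_i$ and has one endpoint in $W_i$ and the other in $V_i \setminus W_i$. Consequently, a subset $F \subseteq \partial W$ corresponds bijectively to a tuple $(F_1, \ldots, F_k)$ with $F_i \subseteq \partial_{G_i} W_i$, yielding $z^{|F|} = \prod_{i=1}^k z^{|F_i|}$.

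The last piece is the neighborhood factor. Because every edge of $F_i$ is contained in $V_i$, we have $N_{(V, F)}(W) = \bigsqcup_{i=1}^k N_{(V_i, F_i)}(W_i)$, and thus $y^{|N_{(V,F)}(W)|} = \prod_{i=1}^k y^{|N_{(V_i, F_i)}(W_i)|}$. Substituting these three identities into \eqref{eq:bipart_def} and interchanging the sum over $W$ (resp.\ $F$) with the product over $i$ gives
\[
B(G;x,y,z) = \prod_{i=1}^{k}\Biggl(\sum_{W_i \subseteq V_i} x^{|W_i|} \sum_{F_i \subseteq \partial_{G_i} W_i} y^{|N_{(V_i, F_i)}(W_i)|} z^{|F_i|}\Biggr) = \prod_{i=1}^{k} B(G_i; x, y, z).
\]

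No step is truly difficult, but the one that deserves the most care is the edge boundary decomposition: it is the place where the hypothesis that the $G_i$ are \emph{components} (rather than an arbitrary spanning-subgraph decomposition) is used, and it is what rules out any spurious edges between $W_i$ and $V_j \setminus W_j$ for $j \neq i$. Once that is in hand, the multiplicativity is a bookkeeping exercise.
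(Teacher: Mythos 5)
Your proof is correct. Note that the paper does not actually supply a proof of this theorem --- it simply cites \cite{Dod2015} --- so there is no in-paper argument to compare against; your direct verification from Equation~\eqref{eq:bipart_def} is a complete, self-contained justification. Each of the three decompositions you check (of $W$, of $\partial W$ together with $F$, and of $N_{(V,F)}(W)$) is valid precisely because every edge of $G$ has both endpoints in a single $V_i$, and the final interchange of the double sum with the product over $i$ is the standard product-of-sums identity. You are also right to flag the edge-boundary step as the one place where the component hypothesis is genuinely used.
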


\subsection{Vertex and Edge Deletion}
First we consider decompositions for the bipartition polynomial of a graph with respect
to local vertex and edge operations.
\begin{theorem}
	The bipartition polynomial of a graph $G=(V,E)$ satisfies for each vertex $v\in V$
	the relation
	\begin{align*}
	B(G;x,y,z) &= (1+x) B(G-v;x,y,z) \\
	& + \hspace{-10pt}\sum_{\substack{(S\cup T,F) \text{ conn. bip.}\\v\in S\cup T}}
	\hspace{-20pt} z^{|F|}(x^{|S|}y^{|T|}+x^{|T|}y^{|S|})B(G-(S\cup T);x,y,z) \\
	&= (1+x) B(G-v;x,y,z) 
	+ \hspace{-10pt}\sum_{\substack{(S\cup T,F) \text{ tree of } G\\v\in S\cup T}}
	\hspace{-20pt} z^{|F|-1}(1+z)^{\mathrm{ext}(S\cup T,F)} \\
	& \times (x^{|S|}y^{|T|}+x^{|T|}y^{|S|})B(G-(S\cup T);x,y,z),
	\end{align*}
	where the first sum is taken over all proper subgraphs, and the second sum is over 
	all nontrivial trees (having at least one edge) of $G$ that contain the vertex $v$.
\end{theorem}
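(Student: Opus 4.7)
The plan is to prove both equalities by applying an earlier representation of $B(G;x,y,z)$ and partitioning the resulting sum according to the role of $v$ in the summed object.

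For the first equality, I would start from the bipartite representation (Theorem~\ref{theo:bip_representation}) and split the sum over bipartite spanning subgraphs $(V,F)$ into two cases. Case (i): $v$ is isolated in $(V,F)$. This forces $F\subseteq E(G-v)$, preserves the set of proper components, and yields $\iso(V,F)=1+\iso(V\setminus\{v\},F)$, so these terms collectively contribute $(1+x)\,B(G-v;x,y,z)$. Case (ii): $v$ lies in a unique proper component $(S\cup T,A)$ of $(V,F)$. Writing $F=A\sqcup F'$ with $F'\subseteq E(G-(S\cup T))$ inducing a bipartite graph on $V\setminus(S\cup T)$, the summand factors as $z^{|A|}(x^{|S|}y^{|T|}+x^{|T|}y^{|S|})$ times the summand of the bipartite representation of $G-(S\cup T)$ at $F'$. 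Summing first over $F'$ and then over the choice of $(S\cup T,A)$ produces the second term.

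For the second equality, I would perform the same dichotomy on the forest representation (Theorem~\ref{theo:forest_representation}), now summing over spanning forests $H$ of $G$. Case (i) again yields $(1+x)\,B(G-v;x,y,z)$. In case (ii), $v$ lies in a unique tree component $(S\cup T,F)$ of $H$, and $H':=H-(S\cup T)$ is a spanning forest of $G-(S\cup T)$. Using the additivities $\iso(H)=\iso(H')$, $n-k(H)=|F|+(n'-k(H'))$ with $n'=n-|S\cup T|$, the splitting $\cp(H)=\{(S\cup T,F)\}\cup\cp(H')$, and (crucially) $\mathrm{ext}(H)=\mathrm{ext}(S\cup T,F)+\mathrm{ext}(H')$, the summand factors into a tree-dependent part and the summand of the forest representation of $G-(S\cup T)$ at $H'$; summing over $H'$ and then over $(S\cup T,F)$ gives the second term.

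The main technical point will be the additivity of external activity used in case (ii) of the forest argument. An edge $e\in E(G)\setminus E(H)$ creates a cycle in $H+e$ only when both of its endpoints lie in a common tree component of $H$, and that unique cycle is then contained entirely within that component. Hence both the parity of the cycle and the ``largest-in-its-cycle'' test depend solely on edges within the component, so the externally active edges partition cleanly between $(S\cup T,F)$ and $H'$. Granted this, the factorization in case (ii) follows by direct inspection and both stated formulas drop out.
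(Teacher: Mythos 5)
Your proposal is correct and follows essentially the same route as the paper: the paper proves the first equality by splitting the bipartite representation according to whether $F\cap\partial v=\emptyset$ (giving $(1+x)B(G-v)$) or $v$ lies in a proper component (factored out via multiplicativity), exactly as in your case (i)/(ii). The paper dismisses the second equality with ``can be shown similarly,'' so your explicit treatment via the forest representation---in particular the additivity $\mathrm{ext}(H)=\mathrm{ext}(S\cup T,F)+\mathrm{ext}(H')$, justified by the observation that the unique cycle created by a non-forest edge lies within a single component---is a correct and welcome filling-in of a detail the paper omits.
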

\begin{proof}
We show the proof for the first equality and the second one can be shown similarly. From 
Theorem \ref{theo:bip_representation}, we obtain
	\begin{align*}
		B(G;x,y,z) &= \hspace{-14pt}\sum_{\substack{F\subseteq E \\(V,F)\text{ is bipartite}}}
		\hspace{-20pt} z^{|F|}(1+x)^{\mathrm{iso}(V,F)} 
		\hspace{-20pt} \prod_{(S\cup T,A)\in\mathrm{Comp}(V,F)}
		\hspace{-25pt}(x^{|S|}y^{|T|}+x^{|T|}y^{|S|}) \\
		 &= \hspace{-14pt}\sum_{\substack{F\subseteq E\setminus \partial v 
		 		\\(V,F)\text{ is bipartite}}}
		 \hspace{-20pt} z^{|F|}(1+x)^{\mathrm{iso}(V,F)} 
		 \hspace{-20pt} \prod_{(S\cup T,A)\in\mathrm{Comp}(V,F)}
		 \hspace{-25pt}(x^{|S|}y^{|T|}+x^{|T|}y^{|S|}) \\
		 &+ \hspace{-14pt}\sum_{\substack{F\subseteq E,\, F\cap \partial v\neq \emptyset 
		 		\\(V,F)\text{ is bipartite}}}
		 \hspace{-20pt} z^{|F|}(1+x)^{\mathrm{iso}(V,F)} 
		 \hspace{-20pt} \prod_{(S\cup T,A)\in\mathrm{Comp}(V,F)}
		 \hspace{-25pt}(x^{|S|}y^{|T|}+x^{|T|}y^{|S|}) \\
		&= (1+x) B(G-v;x,y,z) \\
		& + \hspace{-10pt}\sum_{\substack{(S\cup T,F) \text{ conn. bip.}\\v\in S\cup T}}
		\hspace{-20pt} z^{|F|}(x^{|S|}y^{|T|}+x^{|T|}y^{|S|})B(G-(S\cup T);x,y,z) \\
	\end{align*}
The last equality results from factoring out the term of the product that corresponds
to the component containing $v$ and applying Theorem \ref{theo:bip_multiplicative}.
\end{proof}

The proof of the next statement can be performed in the same way.
\begin{theorem}
	Let $G=(V,E)$ be a graph and $e\in E$; then
	\begin{align*}
	B(G;x,y,z) &= B(G-e;x,y,z) \\
	& + \hspace{-10pt}\sum_{\substack{(S\cup T,F) \text{ conn. bip.}\\e\in F}}
	\hspace{-20pt} z^{|F|}(x^{|S|}y^{|T|}+x^{|T|}y^{|S|})B(G-(S\cup T);x,y,z).
	\end{align*}
\end{theorem}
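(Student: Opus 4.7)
The plan is to mimic the vertex-deletion argument almost verbatim, using the bipartite representation (Theorem \ref{theo:bip_representation}) as the starting point. I would split the sum over bipartite edge subsets $F \subseteq E$ according to whether $e \in F$ or not, and then factor out the component of $(V,F)$ containing $e$ in the second part.

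First I would write
\[
B(G;x,y,z) = \sum_{\substack{F\subseteq E,\, e\notin F\\(V,F)\text{ bipartite}}} \cdots \;+\; \sum_{\substack{F\subseteq E,\, e\in F\\(V,F)\text{ bipartite}}} \cdots,
\]
where the ellipses stand for the summand $z^{|F|}(1+x)^{\iso(V,F)}\prod_{(S\cup T,A)\in\cp(V,F)}(x^{|S|}y^{|T|}+x^{|T|}y^{|S|})$. The first sum ranges over bipartite edge subsets of $G-e$ (with the same vertex set), so by Theorem \ref{theo:bip_representation} applied to $G-e$ it equals $B(G-e;x,y,z)$.

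For the second sum I would set up the bijection between pairs $(F \ni e,(V,F)\text{ bipartite})$ and pairs $((S\cup T,F_e),F')$, where $(S\cup T,F_e)$ is a connected bipartite subgraph of $G$ containing $e$ and $F' \subseteq E(G-(S\cup T))$ with $(V\setminus(S\cup T),F')$ bipartite. The map takes $F$ to the component $C_e = (S\cup T,F_e)$ of $(V,F)$ containing $e$ together with $F' = F\setminus F_e$; the inverse just takes the disjoint union $F_e \cup F'$. Note that $F'$ must be disjoint from $\partial(S\cup T)$ in $G$, otherwise $C_e$ would not be a component. Under this bijection, the factor of the product corresponding to $C_e$ is $(x^{|S|}y^{|T|}+x^{|T|}y^{|S|})$ and it contributes $z^{|F_e|}$ to the $z$-weight, which can be pulled outside the inner sum over $F'$.

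The one subtle point, which is the main thing to check, is the isolated-vertex factor $(1+x)^{\iso(V,F)}$. Since every vertex of $S\cup T$ sits on the edge $e$-component $C_e$ (which contains at least one edge), no vertex of $S\cup T$ is isolated in $(V,F)$; hence $\iso(V,F)=\iso(V\setminus(S\cup T),F')$. Once this is verified, the inner sum over $F'$ is exactly
\[
\sum_{\substack{F'\subseteq E(G-(S\cup T))\\(V\setminus(S\cup T),F')\text{ bip.}}}\hspace{-20pt} z^{|F'|}(1+x)^{\iso(V\setminus(S\cup T),F')}\hspace{-20pt}\prod_{(S'\cup T',A')\in\cp(V\setminus(S\cup T),F')}\hspace{-30pt}(x^{|S'|}y^{|T'|}+x^{|T'|}y^{|S'|}),
\]
which equals $B(G-(S\cup T);x,y,z)$ by Theorem \ref{theo:bip_representation}. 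Summing the outside factors over all connected bipartite $(S\cup T,F_e)$ with $e\in F_e$ produces the claimed expression. The only non-routine step is the isolated-vertex bookkeeping; everything else is the same factoring-out-a-component manipulation as in the vertex-deletion proof, which is why the authors remark that this proof goes through identically.
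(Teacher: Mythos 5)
Your proposal is correct and follows exactly the route the paper intends: the paper gives no separate proof for the edge-deletion formula, stating only that it ``can be performed in the same way'' as the vertex-deletion case, and your argument is precisely that adaptation (split the bipartite representation on whether $e\in F$, identify the $e\notin F$ part with $B(G-e)$, and factor out the component containing $e$). Your explicit check that the vertices of $S\cup T$ contribute nothing to the isolated-vertex factor is a welcome detail that the paper leaves implicit.
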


\section{Graph Polynomials that can be Derived from the Bipartition Polynomial}
\label{sec:poly}

Several well-known graph polynomials can be obtained by substitution of the variables
of the bipartition polynomial and (in some case) by multiplication with a certain
factor that can easily be obtained from graph parameters like order, size, and the number
of components. First we recall some results from \cite{Dod2015}.

\paragraph{Domination polynomial}
The \emph{domination polynomial} of a graph $G=(V,E)$, introduced in \cite{Arocha2000}, 
is the ordinary generating function
for the number of dominating sets of $G$. Let $d_k(G)$ be the number of dominating sets 
of size $k$ of $G$. We define the domination polynomial of $G$ by
\[ 
D(G,x) = \sum_{k=0}^{n}d_k(G)x^k.
\]
The domination polynomial satisfies, \cite{Dod2015},
\begin{equation}
	D(G,x) =(1+x)^{n}B\left(G;\frac{-1}{1+x},\frac{x}{1+x},-1\right).
	\label{eq:dom_bip}
\end{equation}
A \emph{generalized domination polynomial} is given by
\[ 
B(G;x,1-y,-1) = \sum_{W\subseteq V} x^{|W|}y^{|N_G(W)|},
\]
which follows directly from Theorem \ref{theo:prod_representation}. The variable $y$
counts here the number of vertices that are dominated by a given set $W$. Consequently, 
the coefficient of $x^iy^j$ in $B(G;x,1-y,-1)$ gives the number of vertex subsets of
cardinality $i$ that dominate a vertex set of size $j$.

\paragraph{Ising polynomial}
The \emph{Ising polynomial} of a graph $G$ is defined by
\[ 
Z(G;x,y) = x^ny^m\sum_{W\subseteq V}x^{-|W|}y^{-|\partial W|}.	
\]
The Ising polynomial has been differently introduced in \cite{Andren2009} by
\[ 
\tilde{Z}(G;x,y) = \sum_{\sigma\in \Omega}x^{\epsilon(\sigma)}y^{M(\sigma)},
\]
where $\sigma : V\rightarrow \{-1,1\}$ is the \emph{state} of $G=(V,E)$,  $\sigma(v)$ 
the \emph{magnetization} of $v\in V$, $\Omega$ the set of all states of $G$. The sum
\[
M(\sigma)=\sum\limits_{v\in V}\sigma(v) 
\]
is called \emph{magnetization} of $G$ with respect to $\sigma$. The parameter 
$\epsilon(\sigma,e)$ defines the \emph{energy} of the edge $e\in E$. The energy of $G$ 
with respect to $\sigma$ is 
\[ 
\epsilon(\sigma)=\sum\limits_{e\in E}\epsilon(\sigma,e).
\]
The here given notions result from the interpretation of the Ising polynomial (Ising model)
in statistical physics. The relation between the two above given representations of the Ising
polynomial is
\[
\tilde{Z}(G;x,y) = x^{-n}y^{-m}Z(G;x^2,y^2).
\]
Generalizations and modifications of the Ising polynomial and their efficient
computation in graphs of bounded clique-width are considered in \cite{Kotek2015}.

The Ising polynomial can be obtained from the bipartition polynomial by, \cite{Dod2015},
\begin{equation}
	Z(G;x,y) = x^{n}y^{m}B\left(G;\frac{1}{x},1,\frac{1}{y}-1\right). 
	\label{eq:Ising_bip}
\end{equation}

\paragraph{Cut polynomial}
The \emph{cut polynomial} of a graph $G=(V,E)$ is the ordinary generating function 
for the number of cuts of $G$,
\[ 
C(G,z) = \frac{1}{2^{k(G)}}\sum_{W\in V}z^{|\partial W|}.
\]
The relation between cut polynomial and bipartition polynomial is given by, 
see \cite{Dod2015},
\begin{equation}
	C(G,z) = \frac{1}{2^{k(G)}} B(G;1,1,z-1).
	\label{eq:cut_bip}
\end{equation} 
\begin{corollary}\label{coro:forest}
	A graph $G$ of order $n$ with $k$ components is a forest if and only if 
	$C(G,z)=(1+z)^{n-k}$.
\end{corollary}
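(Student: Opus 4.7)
The plan is to establish both directions separately. For the forward direction, I would first reduce to the tree case using multiplicativity of the cut polynomial across connected components: since $|\partial W|$ decomposes as $\sum_i |\partial_{G_i}(W\cap V_i)|$ and $k(G) = \sum_i k(G_i)$, summing over $W$ gives $C(G,z) = \prod_i C(G_i,z)$, which matches the factorization $(1+z)^{n-k} = \prod_i (1+z)^{n_i - 1}$. So it suffices to prove $C(T,z) = (1+z)^{n-1}$ for a tree $T$. The key observation is that for any $F \subseteq E(T)$ there are exactly two vertex sets $W$ with $\partial W = F$: deleting $F$ splits $T$ into $|F|+1$ subtrees, contracting them yields a tree on $|F|+1$ vertices with edge set $F$, and such a tree admits exactly two proper $2$-colorings, each giving one valid $W$. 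Therefore
\[
	\sum_{W\subseteq V}z^{|\partial W|} \;=\; 2\sum_{F\subseteq E(T)} z^{|F|} \;=\; 2(1+z)^{n-1},
\]
and dividing by $2^{k(T)}=2$ yields the claim.

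For the converse I would extract the edge count $m$ directly from $C(G,z)$. Counting incidences,
\[
	\sum_{W\subseteq V}|\partial W| \;=\; \sum_{e\in E}\left|\{W : e\in \partial W\}\right| \;=\; m\cdot 2^{n-1},
\]
because each edge is in the boundary of exactly $2^{n-1}$ subsets. On the other hand, this sum is the derivative at $z=1$ of $\sum_W z^{|\partial W|} = 2^k C(G,z)$, namely $2^k C'(G,1)$. Equating the two gives $m = 2^{k-n+1}C'(G,1)$. If $C(G,z) = (1+z)^{n-k}$, then $C'(G,1) = (n-k)2^{n-k-1}$, whence $m = n-k$. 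A graph on $n$ vertices with $k$ components and $n-k$ edges must be a forest, completing the converse.

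The main place where some thought is required is the converse, where one must choose an invariant readable from $C(G,z)$ that is sharp enough to pin down forestness; isolating $m$ via the derivative at $z=1$ is the cleanest route. A purely conceptual alternative interprets $C(G,z)$ as the Hamming weight enumerator of the cut space of $G$ inside $\mathbb{F}_2^E$, noting that $W_U(z) = (1+z)^{\dim U}$ for a subspace $U \le \mathbb{F}_2^E$ holds exactly when $U$ occupies a full coordinate subspace on its support, which for the loopless cut space forces $|E| = n-k$.
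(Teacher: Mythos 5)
Your proof is correct. The paper disposes of this corollary in one sentence, appealing to the fact that a forest is the only graph in which every edge subset is a cut; your write-up is essentially a fully worked-out version of that idea, but the two halves deserve separate comment. Your forward direction (reduce to trees by multiplicativity, then count exactly two sets $W$ with $\partial W=F$ for each $F\subseteq E(T)$ via the two proper $2$-colourings of the contracted tree) is precisely the content the paper leaves implicit, since each cut $F$ of a general graph arises from exactly $2^{k}$ sets $W$, so that $C(G,z)=\sum_{F\text{ cut}}z^{|F|}$. Your converse, however, is a genuine addition: the paper's one-liner gives no mechanism for the ``only if'' direction, whereas you extract $m$ from $C'(G,1)$ via $\sum_W|\partial W|=m\,2^{n-1}$ and conclude $m=n-k$, which forces every component to be a tree. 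This is clean and sharp; your closing remark identifying $2^{k}C(G,z)$ with the weight enumerator of the cut space (so that $C(G,z)=(1+z)^{n-k}$ forces the cut space to be all of $\mathbb{F}_2^{E}$, hence $|E|=n-k$) is the conceptual core of the paper's assertion. The only caveat, which applies equally to the statement as printed, is that the argument implicitly assumes $G$ is loopless (a loop never lies in any $\partial W$, so a vertex with a loop has $C=1=(1+z)^{0}$ yet is not a forest); this is consistent with the paper's conventions elsewhere, e.g.\ Proposition \ref{prop:bipart}.
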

\begin{proof}
	The statement follows from the fact that a forest is the only graph for which any
	edge subset is a cut.
\end{proof}

The polynomial
\begin{equation}
	B(G;x,1,z-1) = \sum_{W \subseteq V}	x^{|W|}z^{|\partial W|}
	\label{eq:gen_cut}
\end{equation}
can be considered as a generalized cut polynomial; it is equivalent to the Ising polynomial.
Equation (\ref{eq:gen_cut}) implies also
\begin{equation} 
	\left. 		\frac{1}{\partial x}B(G;x,1,t-1) \right\vert_{x=0}
	= \sum_{v\in V} t^{\deg v},
	\label{eq:deg_gen}
\end{equation}
which is the \emph{degree generating function} of $G$.

\paragraph{Euler polynomial} 
An \emph{Eulerian subgraph} of a graph $G=(V,E)$ is a spanning subgraph 
of $G$ in which all vertices have even degree. The \emph{Euler polynomial}
of $G$ is defined by
\[ 
\mathcal{E}(G,z) = \sum_{\substack{F\subseteq E \\ (V,F)\text{ is Eulerian}}}
\hspace{-20pt}z^{|F|}.
\]
In \cite{Aigner2007} it is shown that the Euler polynomial is related to the Tutte 
polynomial via
\[ 
\mathcal{E}(G,z) = (1-z)^{m-n+k(G)}z^{n-k(G)}
T\left(G;\frac{1}{z},\frac{1+z}{1-z} \right).
\]
There is also a nice direct relation between cut polynomial and Euler polynomial,
which is also shown in \cite{Aigner2007},
\begin{equation}
	C(G,z) = \frac{(1+z)^{|E|}}{2^{|E|-|V|+k(G)}}
	\;\mathcal{E}\left(G,\frac{1-z}{1+z}\right)
	\label{eq:cut_Euler}
\end{equation}
Solving Equation (\ref{eq:cut_Euler}) for $\mathcal{E}(G,z)$ and substituting $C$
according to Equation (\ref{eq:cut_bip}) yields
\begin{equation} 
	\mathcal{E}(G,z) = \frac{(1+z)^{|E|}}{2^{|V|}}B\left(G;1,1,\frac{-2z}{1+z} \right).
	\label{eq:euler_bip}
\end{equation}
Let $G$ be a plane graph (a planar graph with a given embedding in the plane) and $G^*$ its
geometric dual. The set of cycles of $G$ is in one-to-one correspondence with the set
of cuts of $G^*$, which yields
\begin{equation*}
	\mathcal{E}(G,z) = C(G^*,z)
\end{equation*}
or, corresponding to Equations (\ref{eq:cut_bip}) and (\ref{eq:euler_bip}),
\begin{equation}
	(1+z)^m B\left(G;1,1,\frac{-2z}{1+z}\right) = 2^{n-1}B(G^*;1,1,z-1).
	\label{eq:planar}
\end{equation}

\paragraph{Van der Waerden polynomial}
The definition of this polynomial is presented in \cite{Andren2009} and based on an idea 
given in \cite{Waerden1941}.
Let $G=(V,E)$ be a graph of order $n$ and size $m$.
Let $w_{ij}(G)$ be the number of subgraphs of $G$ with exactly $j$ edges and $i$ vertices
of odd degree. The \emph{van der Waerden polynomial} of $G$ is defined by
\[ 
W(G;x,y) = \sum_{i=0}^{n}\sum_{j=0}^{m}w_{ij}(G)x^i y^j. 
\] 
From \cite{Andren2009} (Theorem 2.9), we obtain easily
\[ 
W(G;x,y) = \left(\frac{1-x}{2}\right)^n(1-y)^m 
Z\left(G;\frac{1+x}{1-y},\frac{1+y}{1-y}\right),
\]
where $Z$ is the Ising polynomial.
The van der Waerden polynomial can be derived from the bipartition polynomial by
\begin{equation}
	W(G;x,y) = \left(\frac{1+x}{2}\right)^n(1+y)^m 
	B\left(G;\frac{1-x}{1+x},1,-\frac{2y}{1+y}\right),
	\label{eq:vdwarden_bipart}
\end{equation}
where we use Equation (\ref{eq:Ising_bip}).

\paragraph{Matching polynomial}
The \emph{matching polynomial}, see \cite{Farrell1979}, of $G$ is defined by
\[ 
M(G,x) = \sum_{\substack{F\subseteq E\\ F\text{ matching in }G} }
\hspace{-20pt}x^{|F|}.
\]
Notice that the definition that is given here corresponds to \emph{matching generating
	polynomial} from \cite{Lovasz2009}.
A subgraph of $G$ with exactly $k$ edges and exactly $2k$ vertices of odd degree is
a matching in $G$, which yields
\[ 
M(G,t) = \lim_{y\rightarrow 0} W(G;t y^{-\frac{1}{2}},y).
\]
Substituting Equation (\ref{eq:vdwarden_bipart}) for $W$, we obtain
\begin{equation}
	M(G,t) = \lim_{y\rightarrow 0} \left(\frac{\sqrt{y}+t}{2\sqrt{y}}\right)^n
	(1+y)^m B\left(G;\frac{\sqrt{y}-t}{\sqrt{y}+t},1,-\frac{2y}{1+y}\right).
	\label{eq:match_bipart}
\end{equation}

\paragraph{Independence polynomial}
The \emph{independence polynomial} of a graph $G=(V,E)$ is the ordinary generating function
for the number of independent set of $G$,
\[ 
I(G,x) = \sum_{\substack{W\subseteq V\\ W\text{ independent in }G}}
\hspace{-25pt}x^{|W|}.
\]
If $G$ is a simple $r$-regular graph, then
\[
I(G,t) = \lim_{x\rightarrow0}B\left(G;tx^{r},1,\frac{1}{x}-1\right).
\]	
The proof of this relation is given in \cite{Dod2015}.
We can easily rewrite the last equation in order to avoid the limit:
\[ 
I(G,t) = \frac{1}{2\pi}\int_{0}^{2\pi}B(G;te^{irx},1,e^{-ix}-1)dx.
\]
The substitution $x\mapsto e^{ix}$ transforms each power of $x$ into a periodic function
whose period divides $2\pi$ such that the integration over $[0,2\pi]$ yields the constant
term (with respect to $x$) multiplied by $2\pi$.

\section{Polynomial Reconstruction} \label{sec:recon}

One of the important questions about graph polynomials is their distinguishing power, 
which can be stated as follows:

Let $\mathcal{C}$ be a graph class and let $P$ be a polynomial-valued isomorphism 
invariant defined on $\mathcal{C}$. Are there nonismorphic graphs $G$ and $H$ in 
$\mathcal{C}$ such that $P(G) = P(H)$?

Although we know that the bipartition polynomial cannot distinguish all graphs up to 
isomorphism, see \cite{Dod2015}, we do not know yet whether there are two nonisomorphic 
trees with the 
same bipartition polynomial. Instead, as trees are well-known to be `reconstructible' 
in various senses, we show that the bipartition polynomial of a graph is 
edge-reconstructible from its polynomial-deck, which shall be defined precisely below.

For a graph $G$, its \emph{\pdeck} is the multiset $\{B(G-e)\}_{e \in E(G)}$. We show 
that the bipartition polynomial is `edge-reconstructible' in most cases in the following 
sense:

A graph $G$ is \emph{\bprecon} if whenever a graph $H$ has the same \pdeck as $G$ we 
have $B(H) = B(G)$.

Unfortunately, there are some graphs with few edges that are not \bprecon. To describe 
such examples, let $P_s$ and $C_s$ denote respectively the path and cycle on $s$ 
vertices. We denote by $C_s + tP_1$ the disjoint union of $C_s$ and $t$ isolated 
vertices, and the graphs $P_s + tP_1$, $sP_2 + tP_1$ etc. are defined similarly. 
The following graphs in each line have the same \pdeck but have different bipartition 
polynomial.
\begin{itemize}
	\item $C_2 + (t+2)P_1$, $P_3 + (t+1)P_1$ and $2P_2 + tP_1$ for $t \geq 0$.
	\item $C_3 + (t+1)P_1$ and $K_{1,3} + tP_1$ for $t \geq 0$.
\end{itemize}

Note that the graphs on each line for fixed $t$ not only have the same \pdeck but 
also have the same collection of one-edge-deleted subgraphs.

We prove the following in this section.

\begin{theorem} \label{theo:reconstruct}
	A graph $G$ is \bprecon unless $G$ is one of the exceptions above. In particular, 
	all graphs with at least four edges are \bprecon.
\end{theorem}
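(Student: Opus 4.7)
The plan is to recover all coefficients of $B(G;x,y,z)$ except the top-degree one directly from the polynomial-deck by a summation identity, then pin down the remaining top coefficient by a structural case analysis.

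First I would establish the identity
\[
\sum_{e \in E(G)} B(G-e;x,y,z) \;=\; m\cdot B(G;x,y,z) \;-\; z\,\partial_z B(G;x,y,z),
\]
with $m=|E(G)|$. This comes from a short double count: a pair $(W,F)$ with $F\subseteq\partial_G W$ contributes to $B(G-e)$ for exactly the $m-|F|$ edges $e\notin F$. Writing $B(G;x,y,z)=\sum_{k=0}^m b_k(x,y)z^k$ and reading off the coefficient of $z^k$ yields $(m-k)\,b_k(x,y)$; since $m$ is the size of the deck and hence known, I recover every $b_k$ with $k<m$, and only the top coefficient $b_m(x,y)$ is left undetermined. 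From the deck I would also extract $n$ (the $x$-degree of any $B(G-e;x,1,1)$), the number of components $k(G)$ via Corollary~\ref{coro:k(G)} applied to a suitable deck element, and the multiset of component sizes via Corollary~\ref{coro:comp_sizes}; and for each $e$ I would decide whether $G-e$ is bipartite using Proposition~\ref{prop:bipart}.

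Next I would analyze $b_m$ using Theorem~\ref{theo:bip_representation}, which gives $b_m(G)=(1+x)^{\iso(G)}\prod_{(S\cup T,A)\in\cp(G)}(x^{|S|}y^{|T|}+x^{|T|}y^{|S|})$ when $G$ is bipartite and $0$ otherwise. If some $G-e$ in the deck is non-bipartite then $G$ is non-bipartite, so $b_m=0$ and the reconstruction is complete. Otherwise every $G-e$ is bipartite, which forces $G$ itself to be bipartite or to have a unique odd cycle containing every edge, i.e.\ $G=C_{2k+1}+sP_1$. For $m\geq 4$ the latter requires $k\geq 2$, and I would show no bipartite $H$ can share the deck: the equality $B(H-e)=B(P_{2k+1}+sP_1)$ for every $e$ forces each $H-e$ to be a forest with exactly one nontrivial component, a tree on $2k+1$ vertices with bipartition $(k+1,k)$. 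Since every $H-e$ is a forest, $H$ itself is a forest; the single-nontrivial-component condition on each deletion then forces $H$ to be the star $K_{1,2k+1}$ plus $s-1$ isolated vertices, which would require $B(K_{1,2k})=B(P_{2k+1})$. But the top $z$-coefficients $xy^{2k}+x^{2k}y$ and $x^{k+1}y^k+x^ky^{k+1}$ disagree for $k\geq 2$, a contradiction; so any $H$ with the same deck is also non-bipartite, and $b_m(H)=0=b_m(G)$.

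In the remaining case $G$ (and any $H$ sharing the deck) is bipartite, and I would recover $b_m(G)$ from the already-determined data via
\[
b_{m-1}(G) \;=\; \sum_{e\in E(G)} b_{m-1}(G-e),
\]
whose summands are the (known) top coefficients of the deck elements. For a non-bridge edge $e$ lying in a proper component, $G-e$ has the same components with the same bipartitions as $G$, so $b_{m-1}(G-e)=b_m(G)$; for bridge edges the summand is read off directly from the deck. Since bridges and non-bridges are distinguished by $k(G-e)$, splitting the sum gives a linear equation determining $b_m(G)$ whenever $G$ has a non-bridge edge. When $G$ is a bipartite forest (no non-bridges), I would combine the multiplicativity of $B$ across components (Theorem~\ref{theo:bip_multiplicative}) with an inductive analysis inside each tree component to recover its bipartition sizes, and hence $b_m(G)$. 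Finally, for $m\leq 3$ the theorem reduces to a direct enumeration, which both verifies the listed exceptions and confirms that no other coincidences of the polynomial-deck occur. The principal obstacle is the bipartite-forest subcase: for a tree component, each single-edge deletion only exposes the bipartitions of the two resulting subtrees up to an independent swap, so reassembling the global bipartition requires a careful consistency argument across all the edges of the tree, and is particularly delicate when the forest has several components of equal size.
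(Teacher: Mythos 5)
Your overall strategy coincides with the paper's: the summation identity $\sum_e B(G-e)=\sum_{F\subsetneq E}(|E|-|F|)z^{|F|}\phi_G(F)$ recovers every $z^k$-coefficient with $k<m$, so only the top coefficient $\phi_G(E)$ remains (zero for non-bipartite $G$, otherwise determined by $\iso(G)$ and the part-size differences of the proper components), and your non-bipartite and bipartite-with-a-cycle cases are dispatched essentially as in Lemmas \ref{lem:recon1} and \ref{lem:recon2} (a non-bridge deletion inside a proper component preserves all components and their bipartitions, so the corresponding deck element hands you $\phi_G(E)$ directly). Two small repairs are needed along the way: ``every $H-e$ is a forest'' does not by itself force $H$ to be a forest ($H$ could be an even cycle plus isolated vertices; this is excluded only by also reading the component orders of $H-e$ from $B(H-e)$), and for $k=1$ the top coefficients of $K_{1,2k}$ and $P_{2k+1}$ in fact coincide since $K_{1,2}=P_3$ --- which is precisely the $C_3/K_{1,3}$ exception, so your inequality must be asserted only for $k\ge 2$, as you do.

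The genuine gap is the bipartite-forest case, which you flag as ``the principal obstacle'' but do not carry out --- and it is exactly where the paper spends most of its effort (Lemmas \ref{lem:recon3}, \ref{lem:forestType}, \ref{lem:treetypes} and the three-case analysis of Section \ref{subsec:forest-recon}). Your stated target, recovering the bipartition sizes of each tree component by induction, is both more than is needed (only the multiset of differences $\bigl||U_i|-|V_i|\bigr|$, the \emph{type}, enters $\phi_G(E)$) and not obviously achievable, since the deck only exposes, for each deleted edge, the unordered pair of types of the two resulting pieces, with no canonical way to align these across different edges or across components of equal order. The paper's resolution imports the degree sequence of $F$ (recoverable from the deck for $m\ge 4$ by the result of Delorme, Favaron and Rautenbach) and proves structural facts about trees --- e.g.\ a tree of type $(0)$ all of whose internal-edge deletions have type $(1,1)$ has all degrees odd, and a tree of type $(2)$ whose leaf-edge deletions all have type $(1,*)$ is $K_{1,3}$ or admits an edge whose deletion has type $(0,2)$ --- which are then used to separate the genuinely ambiguous pairs such as type $(0,0)$ versus $(1,1)$. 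None of this machinery appears in your sketch, so the proof is incomplete at its hardest point. Your closing claim that $m\le 3$ ``reduces to a direct enumeration'' also glosses over the infinite exceptional families with arbitrarily many isolated vertices, which the paper treats separately in Lemma \ref{lem:dif-isol}.
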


\subsection{Graphs with Isolated Vertices}
We shall use the following information on graphs that are deducible from the bipartition 
polynomial. The statement combines the results given in Equations
(\ref{eq:n}), (\ref{eq:m}), (\ref{eq:deg_gen}), Proposition \ref{prop:bipart}, 
and Corollaries \ref{coro:comp_sizes}, \ref{coro:k(G)}, \ref{coro:forest}. 

\begin{theorem} \label{theo:bip-info-collection}
	Let $G$ be a graph. The bipartition polynomial of $G$ yields $|V(G)|$, $|E(G)|$, 
	$k(G)$, $\iso(G)$, the degree sequence, and the multiset of orders of all components of $G$. 
	We can also decide from $B(G)$ whether $G$ is bipartite, a forest, a path, or
	connected. (The last two properties follow from the other ones.)
\end{theorem}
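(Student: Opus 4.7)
The theorem is explicitly advertised as a compilation of prior results, so my plan is to walk down the list of invariants and pair each with the earlier ingredient that yields it. The order $n=|V(G)|$ comes from Equation~(\ref{eq:n}) and the size $m=|E(G)|$ from Equation~(\ref{eq:m}). Corollary~\ref{coro:k(G)} supplies $k(G)=\log_{2}B(G;1,1,-1)$. The multiset of component orders is extracted from the factorization
\[
B(G;x,1,-1)=\prod_{i=1}^{k(G)}(1+x^{k_i})
\]
of Corollary~\ref{coro:comp_sizes}, via the greedy ``smallest positive exponent'' procedure described immediately after that corollary; and $\iso(G)$ is then the multiplicity of $1$ in the resulting multiset.

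Next I would read off the degree sequence using Equation~(\ref{eq:deg_gen}), which asserts
\[
\left.\frac{\partial}{\partial x}B(G;x,1,t-1)\right|_{x=0}=\sum_{v\in V} t^{\deg v};
\]
the multiset of exponents (counted with multiplicity) on the right-hand side is precisely the degree sequence. Bipartiteness is decided by the criterion of Proposition~\ref{prop:bipart}. Whether $G$ is a forest is decided by first reconstructing $C(G,z)$ via Equation~(\ref{eq:cut_bip}) (which uses the already-determined $k(G)$) and then checking the identity $C(G,z)=(1+z)^{n-k(G)}$ of Corollary~\ref{coro:forest}.

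The remaining two properties follow from the invariants already collected, as the statement indicates parenthetically: $G$ is connected iff $k(G)=1$, and $G$ is a path iff it is connected and its degree sequence is that of $P_n$ (two vertices of degree $1$, the remaining $n-2$ of degree $2$, with $P_1$ handled as the trivial single-vertex case). No step presents a genuine obstacle; the only point requiring care is to verify that the two ``procedures'' used above---the component-order extraction from Corollary~\ref{coro:comp_sizes} and the reading of exponent multiplicities in Equation~(\ref{eq:deg_gen})---really return honest integer multisets rather than black-box polynomial values, and both are immediate from unpacking the underlying representations.
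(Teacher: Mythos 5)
Your proposal is correct and follows exactly the route the paper intends: the theorem is stated there without a separate proof, merely as a compilation of Equations~(\ref{eq:n}), (\ref{eq:m}), (\ref{eq:deg_gen}), Proposition~\ref{prop:bipart}, and Corollaries~\ref{coro:k(G)}, \ref{coro:comp_sizes}, \ref{coro:forest}, and you pair each invariant with the same ingredient the authors cite. Your added details (reading $\iso(G)$ as the multiplicity of $1$ among component orders, and characterizing paths via connectedness plus the degree sequence) are sound elaborations of the parenthetical remark in the statement.
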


We begin the proof of Theorem \ref{theo:reconstruct} with the case when two graphs 
$G$ and $H$ have different number of isolated vertices but the same \pdeck. Note that 
from Theorem \ref{theo:bip-info-collection}, we know that two graphs with a different 
number of isolated vertices have a different bipartition polynomial.

\begin{lemma} \label{lem:dif-isol}
	Let $G$ and $H$ be two graphs having different number of isolated vertices. 
	If $G$ and $H$ have the same \pdeck, then there exists $t \geq 0$ such that either
	\begin{itemize}
		\setlength{\itemsep}{0pt}
		\setlength{\parsep}{0pt}
		\setlength{\parskip}{0pt}
		
		\item $\{G, H\} \subset \{C_2 + (t+2)P_1, P_3 + (t+1)P_1, 2P_2 + tP_1 \}$ or
		\item $\{G, H\} = \{C_3 + (t+1)P_1, K_{1,3} + tP_1 \}$.
	\end{itemize}
\end{lemma}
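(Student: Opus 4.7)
The plan is to drive the case analysis by the multiset of isolated-vertex counts across the \pdeck. Set $a = \iso(G)$ and $b = \iso(H)$ and assume $a > b$. For any edge $e = uv$ of a graph $\Gamma$, the quantity $\iso(\Gamma - e) = \iso(\Gamma) + [\deg_\Gamma u = 1] + [\deg_\Gamma v = 1]$ lies in $\{\iso(\Gamma),\, \iso(\Gamma)+1,\, \iso(\Gamma)+2\}$. Write $\alpha_i$ (resp.\ $\beta_i$) for the number of edges of $G$ (resp.\ $H$) that contribute $i$ to this increment. Since $G$ and $H$ have the same \pdeck, the multiset of $\iso$-values of the graphs $G - e$ equals the analogous multiset for $H - f$. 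Comparing the supports $\{a, a+1, a+2\}$ and $\{b, b+1, b+2\}$ rules out $a - b \geq 3$ (the multisets would then need to be empty, contradicting $a > b$ for any nonempty edge set), so $a - b \in \{1, 2\}$.

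For $a - b = 2$ only the value $a = b + 2$ lies in both supports, hence $\alpha_1 = \alpha_2 = 0$ and $\beta_0 = \beta_1 = 0$. The second condition says every edge of $H$ has both endpoints of degree one, so $H = m P_2 + b P_1$. One then identifies each $G - e$ with $H - f = (m-1) P_2 + (b+2) P_1$ (the unique forest with these component orders) and re-adds a single edge to recover $G$ while forbidding any new pendant endpoint. The only edge addition that does not create a pendant is one parallel to an existing $P_2$, and for this to be possible uniformly over all edges of $G$ one is forced into $m = 2$ and $G = C_2 + (b + 2) P_1$, producing the pair $\{C_2 + (t+2) P_1,\; 2 P_2 + t P_1\}$ with $t = b$.

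For $a - b = 1$ the overlap equations become $\alpha_2 = \beta_0 = 0$, $\alpha_0 = \beta_1$, and $\alpha_1 = \beta_2$. From $\beta_0 = 0$ I would deduce that every nontrivial component of $H$ is a star $K_{1, d}$: picking a maximum-degree vertex $v$ in such a component, each edge at $v$ has a pendant endpoint, which must be a neighbor of $v$, so all neighbors of $v$ are pendants and the component equals $K_{1, \deg v}$. Hence $H$ is a disjoint union of stars and isolated vertices, so each $H - f$ is a forest; Corollary \ref{coro:forest} together with the polynomial matching then forces each $G - e$ to be a forest too, whence $G$ is either a forest or all its edges lie on a single cycle. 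In the cycle subcase $G = C_k + a P_1$, matching the degree sequence and vertex count of $G - e = P_k + a P_1$ against a stars-plus-isolated $H - f$ forces the $k - 2$ degree-$2$ vertices of $P_k$ to correspond to $k - 2$ copies of $K_{1, 2}$ in $H - f$, and a short vertex-count balance gives $k \leq 3$; the resulting values $k = 2, 3$ yield the pairs $\{C_2 + (t+2) P_1, P_3 + (t+1) P_1\}$ and $\{C_3 + (t+1) P_1, K_{1, 3} + t P_1\}$. In the forest subcase, every deck entry of $G$ has $\iso = a + 1$, which would be violated by any star $K_{1, d}$ in $H$ with $d \geq 2$, so $H = m P_2 + b P_1$ again; comparing the (now uniform) deck entries then forces $G$ to have exactly one nontrivial component, namely $K_{1, 2} = P_3$, producing $G = P_3 + a P_1$ and $H = 2 P_2 + (a - 1) P_1$, the third member of the first exceptional line.

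The main obstacle is justifying that the nontrivial components of $G - e$ really correspond to stars, rather than merely to trees with the same multiset of orders. I would handle this uniformly using the vertex count, isolated count, and degree sequence invariants supplied by Theorem \ref{theo:bip-info-collection}: in the cycle subcase a putative $P_k$ with $k \geq 4$ fails the vertex-count balance against $(k - 2) K_{1, 2} + c\, P_2 +$\,isolated; in the forest subcase any non-star tree component of $G$ (most concretely a $P_4$) would push a non-star factor into some deck entry, contradicting the stars-plus-isolated structure of every $H - f$ already derived.
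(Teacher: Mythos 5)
Your overall strategy is the same as the paper's: compare the multisets of isolated-vertex counts across the two decks (which are available by Theorem \ref{theo:bip-info-collection}), conclude that the graph with fewer isolated vertices is a disjoint union of stars and isolated vertices, that the other is a forest or a cycle plus isolated vertices, and then finish by cases. Your bookkeeping via the increments $\alpha_i,\beta_i$ is a clean way to organize this, and the case $a-b=2$ as well as the cycle subcase of $a-b=1$ are essentially correct (and match the paper's conclusions; both your argument and the paper's silently pass over the degenerate pair $C_2+P_1$ versus $P_3$, which formally sits at $t=-1$ of the first exceptional family).

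There is, however, a genuine gap in the forest subcase of $a-b=1$. You assert that ``every deck entry of $G$ has $\iso=a+1$,'' i.e.\ that $\alpha_0=0$, and from this deduce $\beta_1=0$ and hence $H=mP_2+bP_1$. But nothing you have established forces $\alpha_0=0$: a forest can perfectly well have internal edges whose deletion creates no new isolated vertex (already $P_4$ does), so at this point in your argument $G$ could a priori have internal edges and $H$ could contain stars $K_{1,d}$ with $d\ge 2$ and $\beta_1=\alpha_0>0$. This assertion is precisely the crux of the subcase and must be argued, not assumed. The paper closes exactly this hole with a counting argument: the number of leaf-edges of the general forest must equal the number of $P_2$-components of the star-union (your $\alpha_1=\beta_2$), and one then bounds the number of $P_2$-components that can appear in any deck entry (three $P_2$-components in some $G-e$ would force a $P_2$-component in $H$, whose deletion yields too many isolated vertices), which pins the graphs down to the listed exceptions. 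You would need to supply an argument of this kind -- for instance, showing that a deck entry of $G$ obtained by deleting an internal edge cannot have the component-order multiset and degree sequence of any union of stars arising as $H-f$ -- before the conclusion $H=mP_2+bP_1$ is available. The remainder of that subcase (once $H=mP_2+bP_1$ is granted, forcing $G=P_3+aP_1$) is fine.
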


\begin{proof}
	Suppose $G$ and $H$ have the same \pdeck and $\iso(G) = t$ while $\iso(H) > t$. 
	Since $\iso(G-e) \leq t+2$ for every edge $e \in E(G)$, we have $\iso(H) = t+1$ or $t+2$. 
	As $\iso(H-f) > t$ for all $f \in E(H)$, we have $\iso(G-e) > \iso(G)$ for all 
	$e \in E(G)$ implying that every edge of $G$ is incident with a vertex of degree 1. 
	That is, the components of $G$ are stars and isolated vertices.
	By Theorem \ref{theo:bip-info-collection}, we deduce that $H-f$ is a forest for every 
	$f \in E(H)$. Hence either $H$ itself is a forest or $H = C_s + t'P_1$ for some $s$ 
	and $t+1 \leq t' \leq t+2$.
	
	If $\iso(H) = t+2$ then every edge removal from $G$ produces two new isolated vertices, 
	so that $G = s'P_2 + tP_1$ for some $s'$. Moreover, no edge of $H$ is incident with 
	a vertex of degree 1, that is, $H = C_s + (t+2)P_1$. Since $G$ and $H$ have the same 
	order and an equal number of edges, we conclude $G = 2P_2 + tP_1$ and $H = C_2 + (t+2)P_1$.
	
	Now we assume $\iso(H) = t+1$. If $H = C_s + (t+1)P_1$ for some $s$, then for all $f \in E(H)$, $\iso(H-f) = t+1$ and $H-f$ has maximum degree at most two. As $G$ and $H$ have the same \pdeck, the same holds for $G-e$ for all $e \in E(G)$. Since $G$ is a disjoint union of stars with $t$ isolated vertices, the only possibility for this case is $G = K_{1,3} + tP_1$ and $H = C_3 + (t+1)P_1$.
	
	Now we also assume that $H$ is a forest. Theorem \ref{theo:bip-info-collection} states 
	that we can decide the orders of the components from the bipartition polynomial. 
	If $G-e$ for some $e \in E(G)$ has three $P_2$-components, then $H-f$ has it too for 
	some $f \in E(H)$ and $H$ has a $P_2$-component. Removing its edge produces a subgraph 
	with $t+3$ isolated vertices, which cannot be obtained from $G$ by removing only one 
	edge. Thus for all $e \in E(G)$, $G-e$ can have at most two $P_2$-components and $G$ 
	may have at most three $P_2$-components. If $G$ has three $P_2$-components, then they
	are the only nontrivial components of $G$.
	
	On the other hand, as $H$ is a forest, each nontrivial component of $H$ has at least 
	two leaves which leave $t+2$ isolated vertices each when removed. The number of leaves 
	of $H$ must be equal to the number of $P_2$-components of $G$, so that either 
	$G = 3P_2 + tP_1$ or $H = P_s + (t+1)P_1$ for some $s$. It is easy to check that 
	for this case the only possibility of non-isomorphic pair $G$ and $H$ with same \pdeck is 
	$G = 2P_2 + tP_1$ and $H = P_3 + (t+1)P_1$.
\end{proof}

\subsection{Cyclic Graphs}
Because of Lemma \ref{lem:dif-isol} we only need to compare those graphs without 
isolated vertices. The remaining part of our proof of Theorem \ref{theo:reconstruct} is 
presented in the following order.
\begin{enumerate}
	\item Every non-bipartite graph except $C_3 + tP_1$ for $t \geq 1$ is \bprecon.
	\item Every bipartite graph with a cycle except $C_2 + tP_1$ for $t \geq 2$ is \bprecon.
	\item Every forest except $P_3 + (t+1)P_1$, $2P_2 + tP_1$, $K_{1,3} + tP_1$ for $t \geq 0$ is \bprecon.
\end{enumerate}
The first two are simple but the proof for the third case is a bit longer so we defer it 
to Section \ref{subsec:forest-recon}.

Given a proper bipartite graph $K$ with bipartition $(U_1,U_2)$, let 
$m(K) = \min(|U_1|, |U_2|)$ and $M(K) = \max(|U_1|, |U_2|)$. 
Theorem \ref{theo:bip_representation} states 
\begin{equation}
	B(G;x,y,z) = \hspace{-15pt}\sum_{\substack{F \subseteq E \\ (V,F) \text{ bipartite}}}
	\hspace{-15pt} z^{|F|} (1+x)^{\iso(V,F)} 
	\hspace{-15pt} \prod_{K \in \cp(V,F)}
	\hspace{-10pt} [x^{M(K)} y^{m(K)} + x^{m(K)} y^{M(K)}].
	\label{eq:th5}
\end{equation}
Let us define for each $F \subseteq E$, a polynomial $\phi_G(F;x,y)$ or simply $\phi_G(F)$ as
\begin{equation*}
	\phi_G(F) := \left\lbrace \begin{array}{l}
		(1+x)^{\iso(V,F)} \hspace{-20pt}
		\prod\limits_{K \in \cp(V,F)} \hspace{-17pt}
		[x^{M(K)} y^{m(K)} + x^{m(K)} y^{M(K)}]\text{ if $(V,F)$ is bipartite,} \\
		0  \text{ otherwise.}
	\end{array}\right.
\end{equation*}
We also write $B(G)$ instead of $B(G;x,y,z)$ for convenience.
With this definition, Equation (\ref{eq:th5}) simplifies to
\begin{equation} \label{eq:biprep_short}
	B(G) = \sum_{F \subseteq E} z^{|F|} \phi_G(F).
\end{equation}
We consider the sum $B'(G) = \sum\limits_{e \in E} B(G-e)$ for a given multiset $\{ B(G-e)\}_{e \in E}$.
For each $F \subsetneq E$, the term $z^{|F|} \phi_G(F)$ appears precisely $|E| - |F|$ times on the right-hand-side so that
\begin{equation*}
	B'(G) = \sum_{F \subsetneq E} \big( |E| - |F| \big)  z^{|F|} \phi_G(F).
\end{equation*}
Thus for each $k = 0, 1, \ldots, |E|-1$, the coefficient of $z^k$ in $B(G)$ is the coefficient of $z^k$ in $B'(G)$ divided by $|E| - k$. The only remaining term to decide $B(G)$ is $z^{|E|} \phi_G(E)$. Therefore, to show $G$ is \bprecon it is enough to show that if $H = (V',E')$ is another graph with the same \pdeck as $G$, then $\phi_H(E') = \phi_G(E)$.

Now we show that nonbipartite graphs are \bprecon except $C_3 + tP_1$ for $t \geq 1$.
\begin{lemma} \label{lem:recon1}
	Every nonbipartite graph except $C_3 + tP_1$ for $t \geq 1$ is \bprecon.
\end{lemma}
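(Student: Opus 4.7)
The plan is to use the framework developed just before the lemma: by Equation \eqref{eq:biprep_short} and the identity for $B'(G)$, the \pdeck of $G$ already determines every coefficient $[z^k]B(G)$ for $k < |E(G)|$, so proving $B(G) = B(H)$ reduces to showing $\phi_H(E(H)) = \phi_G(E(G))$. Since $G$ is nonbipartite we have $\phi_G(E(G)) = 0$, and the task becomes verifying that $H$ is also nonbipartite.

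Before attacking that, I would first reduce to the case where $G$ and $H$ have no isolated vertices. If $\iso(G) \neq \iso(H)$, then Lemma \ref{lem:dif-isol} places $\{G,H\}$ among a short list of exceptions whose only nonbipartite member is $C_3 + (t+1)P_1$, which is excluded by hypothesis; hence $\iso(G) = \iso(H) =: t$. Writing $G = G_0 + tP_1$ and $H = H_0 + tP_1$, Theorem \ref{theo:bip_multiplicative} gives $B(G) = (1+x)^t B(G_0)$ and likewise for $H$, and the same factor $(1+x)^t$ scales every element of the respective polynomial-decks; this allows us to pass to $G_0$ and $H_0$, which are nonbipartite and free of isolated vertices.

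The remaining step splits into two cases depending on whether some $G-e$ is nonbipartite. If yes, then Proposition \ref{prop:bipart} detects this already from the polynomial $B(G-e)$; since that polynomial appears in the polynomial-deck of $H$ as some $B(H-f)$, we conclude $H-f$ is nonbipartite, and hence so is $H$. Otherwise every edge of $G$ must lie on every odd cycle of $G$, which (using that $G$ has no isolated vertices and a short argument ruling out two distinct odd cycles) forces $G$ to be the simple odd cycle $C_{2s+1}$. This is the main obstacle, because every deletion produces the bipartite polynomial $B(P_{2s+1})$, so we cannot appeal to Proposition \ref{prop:bipart} on the deck. To close this case I would apply Theorem \ref{theo:bip-info-collection} to each $B(H-f) = B(P_{2s+1})$ to conclude $H - f \cong P_{2s+1}$ (reading off vertex count, component count, and the path property), and then argue structurally: $H$ is connected with $2s+1$ vertices and $2s+1$ edges, hence unicyclic, and the absence of bridges (since every $H-f$ is connected) pins down $H = C_{2s+1}$, which is nonbipartite as required.
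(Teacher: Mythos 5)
Your proposal is correct and follows essentially the same route as the paper's proof: reduce to $\iso(G)=\iso(H)$ via Lemma \ref{lem:dif-isol}, note that $\phi_G(E)=0$ so it suffices to show $H$ is nonbipartite, dispose of the case where some $G-e$ is nonbipartite by reading this off the deck, and otherwise identify $G$ as an odd cycle and use Theorem \ref{theo:bip-info-collection} to force every $H-f$ to be an odd path and hence $H$ to be an odd cycle. The extra details you supply (the $(1+x)^t$ factorization and the unicyclic/bridgeless argument) only elaborate steps the paper states more briefly.
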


\begin{proof}
	By Lemma \ref{lem:dif-isol}, it is enough to show that if $G$ is not bipartite, 
	$\iso(G) = \iso(H)$ and $H$ has the same \pdeck as $G$ then $B(G) = B(H)$. 
	We may additionally assume that $G$ and $H$ have no isolated vertices.
	
	Suppose $G = (V,E)$ is not bipartite, $\iso(G) = 0$ and let $D = \{B(G-e)\}_{e \in E}$. 
	Let $H = (V',E')$ be a graph with $\iso(H) = 0$ whose \pdeck is equal to $D$ as a 
	multiset. If $G-e$ is not bipartite for some $e \in E$ then from the corresponding 
	bipartition polynomial, we infer that $H-e'$ is nonbipartite for some $e' \in E'$ 
	and $\phi_H(E') = 0$, that is, $B(G) = B(H)$. If there is no such $e$, then $G$ is 
	an odd cycle. Applying Theorem \ref{theo:bip-info-collection} to $D$, we deduce that 
	every one-edge-deleted subgraph of $H$ is a path consisting of odd number of vertices 
	and the only graph with such property is an odd cycle, and hence $\phi_H(E') = 0$.
\end{proof}

We now suppose that $G = (V,E)$ is bipartite. Note that the degree of $\phi_G(F)$ is 
precisely $|V|$. Since
\[ 
	x^{M(K)}y^{m(K)} + x^{m(K)}y^{M(K)} 
	= (xy)^{m(K)} \left( x^{M(K) - m(K)} + y^{M(K) - m(K)} \right),
\]
to decide $\phi_G(E)$ we only need $M(K) - m(K)$ for each nontrivial component $K$ in $G$. 
If $G$ has $k$ nontrivial components with bipartitions $(U_i, V_i)$ for 
$i = 1, 2, \ldots, k$ and $t$ isolated vertices, then we say $G$ has \emph{type}
\[ 
	(a_1, a_2, \ldots, a_k, *, *, \ldots, *)
\]
where $a_i = \big||U_i| - |V_i| \big|$ and the number of $*$'s are $t$. We will ignore 
the order of entries in types. From now on we consider the type instead of $\phi_G(E)$.

\begin{lemma} \label{lem:recon2}
	Let $G$ be a bipartite graph. If $G$ has a cycle then $G$ is \bprecon unless $G = C_2 + tP_1$ for some $t \geq 2$.
\end{lemma}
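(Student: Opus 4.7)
The plan is to follow the template established before Lemma \ref{lem:recon1}: since every coefficient of $z^k$ in $B(G)$ with $k<|E|$ is recovered from the deck via $B'(G)=\sum_{e}B(G-e)$, it suffices to show $\phi_G(E)=\phi_H(E')$ for any $H=(V',E')$ sharing the \pdeck of $G$. First I apply Lemma \ref{lem:dif-isol} to reduce to $\iso(G)=\iso(H)$, because the only member of the listed exceptions that is bipartite and contains a cycle is $C_2+(t+2)P_1$, which is precisely the case excluded in the statement. Using Theorem \ref{theo:bip_multiplicative}, I then strip the common isolated vertices and may assume $\iso(G)=\iso(H)=0$.

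The core of the argument rests on the observation that if $e\in E$ is a non-bridge of $G$, then $G-e$ has the same vertex-partition into components as $G$, the bipartition within each component is preserved, and neither endpoint of $e$ becomes isolated (a degree-one endpoint would force $e$ to be a bridge). Consequently $\phi_{G-e}(E\setminus\{e\})=\phi_G(E)$, and this polynomial is the coefficient of $z^{|E|-1}$ in $B(G-e)$. Since $G$ has a cycle, a non-bridge $e$ exists, and the deck equality produces some $e'\in E'$ with $B(H-e')=B(G-e)$. If $e'$ is also a non-bridge of $H$, the identical reasoning gives $\phi_H(E')=\phi_{H-e'}(E'\setminus\{e'\})$, equal to the same coefficient of the common polynomial $B(G-e)=B(H-e')$, yielding $\phi_G(E)=\phi_H(E')$ and hence $B(G)=B(H)$.

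The one obstacle is therefore to guarantee that the matched $e'$ is genuinely a non-bridge of $H$. Corollary \ref{coro:k(G)} lets us read $k(G-e)$ off from $B(G-e)$, so deck equality forces $\min_{e\in E}k(G-e)=\min_{e'\in E'}k(H-e')$. The $G$-minimum equals $k(G)$; the $H$-minimum equals $k(H)$ if $H$ has a non-bridge, and $k(H)+1$ otherwise. I would rule out the latter (forest) case by contradiction: if $H$ were a forest, then $k(H)=k(G)-1$, and equality of orders and sizes gives $|E(G)|-|V(G)|+k(G)=1$, so $G$ has cyclomatic number exactly one. In addition, every deck entry on the $G$-side would have $k$-value $k(G)$, forcing every edge of $G$ to be a non-bridge. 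The only graph with cyclomatic number one, no bridges, and no isolated vertices is a single cycle $C_l$ (with $l$ even or $l=2$, since $G$ is bipartite), and then $|V(G)|=|E(G)|=l$ makes it impossible for $H$ to be a forest on $l$ vertices. Hence $H$ has a non-bridge, $k(H)=k(G)$, and any $e'$ matched to a non-bridge $e$ of $G$ satisfies $k(H-e')=k(G)=k(H)$, so $e'$ is a non-bridge of $H$, completing the argument.
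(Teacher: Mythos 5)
Your overall strategy is essentially the paper's: reduce to $\iso(G)=\iso(H)=0$, find a deck entry $B(G-e)=B(H-e')$ in which both $e$ and $e'$ are non-bridges, and read the type of $H$ (equivalently $\phi_H(E')$) off the coefficient of $z^{|E'|-1}$ of that common polynomial. Your write-up is in fact a little cleaner than the paper's, which splits into the cases ``$G$ connected'' and ``$G$ disconnected'' and picks an edge $e''$ minimizing $k(H-e'')$; your comparison of $\min_{e}k(G-e)$ with $\min_{e'}k(H-e')$, together with the cyclomatic-number count that rules out $H$ being a forest, correctly forces $H$ to have a non-bridge and the matched $e'$ to be one.

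There is, however, one genuine gap: you never establish that $H$ is bipartite, and your key step $\phi_H(E')=\phi_{H-e'}(E'\setminus\{e'\})$ silently assumes it. By definition $\phi_H(E')=0$ when $H$ is non-bipartite, whereas $\phi_{H-e'}(E'\setminus\{e'\})$ can be nonzero, because deleting a non-bridge can destroy every odd cycle (as in $C_3$, where each $C_3-e$ is the bipartite $P_3$); so the chain of equalities breaks at its first link. Knowing that $H-e'$ is bipartite --- which you do get, since $B(H-e')=B(G-e)$ and $G-e$ is bipartite --- does not imply that $H$ is. The paper closes this off at the outset by invoking Lemma \ref{lem:recon1} together with Lemma \ref{lem:dif-isol}: if $H$ were non-bipartite then, since $\iso(H)=0$ excludes the exceptional graphs $C_3+tP_1$ with $t\ge 1$, the graph $H$ would itself be \bprecon and $B(G)=B(H)$ would follow at once. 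Alternatively you could argue directly that a non-bipartite $H$ all of whose one-edge-deleted subgraphs are bipartite must be an odd cycle, and that the resulting deck of paths on an odd number of vertices would force $G$ to be an odd cycle as well, contradicting bipartiteness of $G$. Either way, a sentence to this effect is needed before ``the identical reasoning'' can legitimately be applied to $H$.
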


\begin{proof}
	By Lemmas \ref{lem:dif-isol} and \ref{lem:recon1}, it is enough to show that if $H = (V',E')$ is another bipartite graph with the same \pdeck as $G$ and $\iso(G) =  \iso(H) = 0$, then the type of $H$ is uniquely determined. Note that the exceptions $C_2 + tP_1$ are automatically excluded since $t \geq 2$.
	
	If $G$ is connected, then $G-e$ is connected for some $e$, since $G$ has a cycle. Let 
	$e' \in E'$ be an edge such that $B(H-e') = B(G-e)$. We know that $H$ is bipartite and, 
	by Theorem \ref{theo:bip-info-collection}, $H-e'$ is connected. The coefficient of 
	$z^{|E'| - 1}$ in $B(H-e') = B(G-e)$ tells us the type of $H'$, which must be the same as 
	the type of $H$ and hence $\phi_H(E') = \phi_G(E)$.
	
	Suppose $G$ is not connected. Then $G-e$ contains a cycle for some $e \in E$, and by Theorem \ref{theo:bip-info-collection}, $H$ also has an edge $e'$ such that $H-e'$ contains a cycle. Thus $H$ has a cycle, and we choose $e'' \in E'$ such that $H-e''$ has minimum number of components among the one-edge-deleted subgraphs of $H$. The components of $H$ are vertex-wise same as the components of $H-e''$ and have precisely the same bipartitions. That is, the type of $H$ is the type of $H-e''$ which is again equal to the type of $G$. Hence $\phi_H(E') = \phi_G(E)$ and $G$ is \bprecon.
\end{proof}

\subsection{Bipartition Polynomials of Forests} \label{subsec:forest-recon}

In this section we prove the following lemma, thereby completing the proof of Theorem \ref{theo:reconstruct}.
\begin{lemma} \label{lem:recon3}
	Every forest except $2P_2 + tP_1$, $P_3 + (t+1)P_1$ and $K_{1,3} + tP_1$ for $t \geq 0$ is \bprecon.
\end{lemma}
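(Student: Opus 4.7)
The plan is to combine Lemmas \ref{lem:dif-isol}, \ref{lem:recon1}, and \ref{lem:recon2} with a reconstruction of the bipartition-type data encoded in the top $z$-coefficient of $B(G)$.

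First I would settle the structure of $H$. Since $G$ is a forest, every $G-e$ is a forest, so by Theorem \ref{theo:bip-info-collection} every $H-e'$ is a forest; thus $H$ has at most one cycle, and is therefore either a forest or a cycle plus isolated vertices. In the cyclic case, Lemmas \ref{lem:recon1} and \ref{lem:recon2} either yield $B(G)=B(H)$ directly (when $H$ is itself \bprecon) or place $H$ in an exception family, which via Lemma \ref{lem:dif-isol} would also place $G$ in an exception family, contradicting our hypothesis on $G$. Lemma \ref{lem:dif-isol} similarly forces $\iso(G)=\iso(H)$.

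Next I would reduce to identifying the multiset of bipartition sizes of nontrivial components. By Equation \eqref{eq:biprep_short} and the argument preceding it, the polynomial deck already determines every coefficient of $z^k$ in $B(G)$ for $k<|E(G)|$; only $\phi_G(E)=(1+x)^{\iso(G)}\prod_{i} (x^{M_i}y^{m_i}+x^{m_i}y^{M_i})$ remains, where the product runs over the nontrivial tree components of $G$ with bipartition sizes $(M_i,m_i)$, $M_i\geq m_i$. Writing $\tau(G)$ for the multiset $\{(M_i,m_i)\}$, the task reduces to proving $\tau(G)=\tau(H)$.

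The core of the argument is the reconstruction of $\tau(G)$ from the multiset $\{\tau(G-e)\}_{e\in E(G)}$, which is legible from the top $z$-coefficients of the deck entries, together with the order, size, degree sequence, and component-order multiset of $G$, all deck-recoverable via Theorem \ref{theo:bip-info-collection}. Each edge deletion falls into one of two types: a pendant deletion replaces some $(M_i,m_i)$ by $(M_i-1,m_i)$ or $(M_i,m_i-1)$ (reordered if needed) and raises $\iso$ by one, while an internal deletion replaces $(M_i,m_i)$ by two new pairs whose coordinates sum (up to a swap) to $(M_i,m_i)$ and leaves $\iso$ unchanged. I plan to identify the largest nontrivial component by correlating (a) the maximum value of $M'_i+m'_i$ visible across the entries of $\{\tau(G-e)\}$, (b) the number of leaves of $G$ read from the degree sequence, and (c) the component-order multiset; this should pin down the pair $(M,m)$ for that component. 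Peeling off this component and recursing should then complete the reconstruction.

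The main obstacle I anticipate is the case analysis for small forests, where pendant and internal deletions can produce indistinguishable local effects and where too few components are present to provide disambiguating data. This is exactly the phenomenon behind the exceptions $2P_2+tP_1$, $P_3+(t+1)P_1$, and $K_{1,3}+tP_1$. For forests whose largest nontrivial component has at least three edges, or which have enough components of sufficiently varied sizes, I expect the peeling procedure to act unambiguously; outside that regime a direct finite check over the remaining structural templates should confirm that the listed families are precisely the obstructions.
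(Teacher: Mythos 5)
Your high-level skeleton matches the paper's: dispose of isolated-vertex mismatches and cyclic $H$ via Lemmas \ref{lem:dif-isol}, \ref{lem:recon1}, \ref{lem:recon2}, observe that the \pdeck determines every coefficient of $z^k$ in $B(G)$ for $k<|E|$, and reduce to recovering the top coefficient $\phi_G(E)$, i.e.\ the bipartition data of the components. But the heart of the proof is exactly the step you leave as a plan: showing that this bipartition data of $F$ is determined by the corresponding data of the $F-e$. In the paper this is Lemma \ref{lem:forestType}, whose proof occupies the rest of Section \ref{subsec:forest-recon} and rests on the five-part Lemma \ref{lem:treetypes}. Your ``peel off the largest component'' heuristic is not carried out, and your working assumption that ambiguities arise only for small forests is false: the genuinely hard confusions are, for example, distinguishing a two-component forest of type $(0,0)$ from one of type $(1,1)$ when every deck entry reads $(0,1,*)$, or $(0,a,\dots,a)$ from $(1,a-1,a,\dots,a)$ in the many-component case. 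These occur for arbitrarily large forests and are resolved in the paper only by structural arguments (e.g.\ Lemma \ref{lem:treetypes}(iv): if every internal-edge deletion from a type-$(0)$ tree yields type $(1,1)$, then all degrees are odd, so the presence of a degree-$2$ vertex decides the case). A ``direct finite check over the remaining structural templates'' cannot substitute for this.

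There is a second, smaller gap: you assert that the degree sequence and the component-order multiset of $G$ itself are ``deck-recoverable via Theorem \ref{theo:bip-info-collection}.'' That theorem extracts these invariants from $B(G)$, which is precisely what is being reconstructed; from the \pdeck you only get them for each $G-e$. The paper closes this by invoking the result of \cite{Delorme2002} that the degree sequence is edge-reconstructible for graphs with at least four edges (and by checking the few small forests separately), and by formulating Lemma \ref{lem:forestType} so that it consumes only the degree sequence and the type-deck. You would need an analogous justification, and note also that your reduction to the multiset of pairs $(M_i,m_i)$ presupposes that this multiset can be read off from the polynomial $\prod_i(x^{M_i}y^{m_i}+x^{m_i}y^{M_i})$, which deserves at least a remark (the paper sidesteps this by working with the differences $M_i-m_i$ together with the separately known component orders).
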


To prove Lemma \ref{lem:recon3} for forests with at least four edges we show the following:

\begin{lemma} \label{lem:forestType}
	Let $F$ be a forest. The type of $F$ is uniquely determined from the degree sequence of $F$ and the multiset consisting of types of $F-e$ for all $e \in E(F)$.
\end{lemma}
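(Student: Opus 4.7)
The plan is to classify each deck entry by the number of stars it contains. Writing $t=\iso(F)$, read off from the degree sequence, one checks that $\iso(F-e)-t\in\{0,1,2\}$ takes the value $2$ precisely when $e$ is the unique edge of a $P_2$-component, $1$ when $e$ is a pendant edge of a non-$P_2$-component, and $0$ when $e$ is an internal edge. Consequently the number $p_2$ of $P_2$-components of $F$ equals the number of deck entries with $t+2$ stars, and the argument splits into two cases.

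If $p_2\ge 1$, pick any deck entry $E$ with $t+2$ stars. The removed edge had type $0$ (being the sole edge of a $P_2$-component), so the non-star part of $E$ is exactly the multiset $\{a_j:j\ne i_0\}$ of the other components' types. Therefore the type of $F$ is
\[
\bigl(E\setminus\{\ast^{\,t+2}\}\bigr)\sqcup\{0\}\sqcup\{\ast^{\,t}\},
\]
and we are done in this case.

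If $p_2=0$, every nontrivial component of $F$ has at least three vertices, hence at least two leaves. My plan is strong induction on $m=|E(F)|$, with base cases $m\le 3$ treated directly: inspection of the finitely many forests with at most three edges shows that the degree sequence alone already determines the type. For the inductive step, fix a leaf $v$ of $F$ with neighbour $u$ and set $F'=F-v$. The degree sequence of $F'$ is computable from that of $F$ by deleting one $1$ and decrementing one other entry. I would then argue that the multiset $\{\mathrm{type}(F'-f)\}_{f\in E(F')}$ is extractable from $\{\mathrm{type}(F-f)\}_{f\in E(F)\setminus\{uv\}}$ by removing in a canonical way the star corresponding to $v$ from each entry; applying the induction hypothesis to $F'$ then yields $\mathrm{type}(F')$. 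Finally, the type of $F$ differs from that of $F'$ only in the type of the component that used to host $v$, which changes by $\pm 1$, and the correct sign is fixed by the parity constraint $a_i\equiv n_i\pmod 2$ together with $0\le a_i\le n_i-2$.

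The delicate and most obstacle-laden step is extracting the deck of $F'$ from the deck of $F$: one must show that every entry $\mathrm{type}(F'-f)$ appears with the correct multiplicity inside $\{\mathrm{type}(F-f)\}_{f}$ after the ``remove one star'' operation, which in turn requires locating $v$'s contribution uniformly in each entry. This matching can break down for very small or highly symmetric forests, which is why the exceptional families $2P_2+tP_1$, $P_3+(t+1)P_1$, and $K_{1,3}+tP_1$ excluded in Lemma~\ref{lem:recon3} must be handled separately. Once the matching is in hand, the remainder is routine bookkeeping with parities and component sizes.
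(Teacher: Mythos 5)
Your opening reduction (detecting $P_2$-components via deck entries that gain two isolated vertices, and stripping them off) matches the first reduction in the paper's proof and is fine. But the inductive step for the case $p_2=0$ has two genuine gaps, and the second one is not a technicality but a real obstruction.

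First, the operation you propose for passing from the type-deck of $F$ to that of $F'=F-v$ is not what actually happens. For an edge $f\neq uv$, the leaf $v$ is \emph{not} isolated in $F-f$; it is still attached to $u$. So there is no ``star corresponding to $v$'' to remove from $\mathrm{type}(F-f)$. The true relationship is that exactly one \emph{numeric} entry of $\mathrm{type}(F-f)$ changes by $\pm 1$ when $v$ is deleted, namely the entry of the component containing $u$. A type is an unordered multiset of absolute imbalances, so from $\mathrm{type}(F-f)$ alone you cannot tell which entry that is, and different edges $f$ place $u$ in structurally different components. Your ``canonical matching'' is therefore not defined, and I see no way to define it using only the data the lemma permits (degree sequence plus type-deck).

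Second, even granting $\mathrm{type}(F')$ by induction, your rule for recovering $\mathrm{type}(F)$ fails: the relevant entry changes from $a$ to $a\pm 1$, and since $a+1$ and $a-1$ have the \emph{same} parity, the congruence $a_i\equiv n_i\pmod 2$ cannot distinguish them; the bounds $0\le a_i\le n_i-2$ only help in extremal cases. Whether $v$ lands on the majority or minority side of its component's bipartition is exactly the information you need, and it is not supplied by parity. The paper avoids both problems by not inducting at all: it proves structural facts about trees (Lemma \ref{lem:treetypes}, e.g.\ that a tree of type $(a)$ with $a\ge 1$ has two leaf-edges whose removal gives type $(a-1,*)$, and parity statements about degrees when all internal edges give type $(1,1)$) and then does a direct case analysis on the number of components, reading the type of $F$ off from which patterns occur in the \sdeck. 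A final remark: Lemma \ref{lem:forestType} is stated for \emph{all} forests, so deferring ``exceptional families'' to a separate treatment is not available to you here --- those exceptions belong to Lemma \ref{lem:recon3} (where the degree sequence itself is not recoverable), not to this lemma.
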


In \cite{Delorme2002}, it was shown that if $G$ has at least four edges, then the degree 
sequence of $G$ is completely determined from the degree sequences of one-edge-deleted 
subgraphs. Theorem \ref{theo:bip-info-collection} states that the degree sequence is 
obtainable from the bipartition polynomial, so that Lemma \ref{lem:recon3} follows for 
forests with at least four edges. The missing cases for Lemma \ref{lem:recon3} without 
isolated vertices, $P_2$, $3P_2$, $P_3 + P_2$ and $P_4$ as simple graphs and also the 
non-simple ones are easy to check.

We shall use some lemmas about trees. For the definition of the type of a bipartite graph 
see the discussion preceding Lemma \ref{lem:recon2}. In a tree, 
a vertex of degree 1 is a \emph{leaf} and an edge incident with a leaf is a \emph{leaf-edge}. 
An edge is \emph{internal} if it is not a leaf-edge.

\begin{lemma} \label{lem:treetypes}
	Let $T$ be a tree with at least one edge. Let $(U,V)$ be the bipartition of $T$.
	\begin{enumerate}
		\renewcommand{\labelenumi}{(\roman{enumi})}
		\item If $U$ has all the leaves, then $|U| > |V|$.
		\item If $V$ has only one leaf, then $|U| \geq |V|$.
		\item If $T$ has type $(a)$ for $a \geq 1$, then $T$ has two edges $e_1, e_2$ such 
		that both $T-e_1$ and $T-e_2$ have type $(a-1,*)$.
		\item Suppose $T$ has type $(0)$. If $T-e$ has type $(1,1)$ for every internal edge 
		$e$, then the degrees of vertices of $T$ are all odd.
		\item Suppose $T$ has type $(2)$. If the types of $T-e$ with $*$ are all $(1,*)$, then
		either $T$ is $K_{1,3}$ or $T-f$ has type $(0,2)$ for some edge $f$.
	\end{enumerate}
\end{lemma}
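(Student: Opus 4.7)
The proof uses two tools: (a) the bipartite degree-sum identity $\sum_{v \in V} \deg(v) = |U|+|V|-1$ in any tree with bipartition $(U,V)$; and (b) after rooting $T$ at some $r$, the signed subtree-count $f(x) := \sum_{z \in S(x)} \sigma(z)$ with $\sigma(z) = +1$ if $z \in U$ and $-1$ otherwise, which satisfies $f(y) = \sigma(y) + \sum_{c \text{ child of } y} f(c)$ and splits $T-e$ into components of types $|f(y)|$ and $|f(r) - f(y)|$ when $e$ is the parent-edge of a non-root $y$.

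Parts (i)--(iii) fall out from (a). For (i), every $v \in V$ has $\deg(v) \ge 2$, so $2|V| \le |U|+|V|-1$ and $|V| \le |U|-1$. For (ii), exactly one leaf in $V$ yields $1 + 2(|V|-1) \le |U|+|V|-1$, hence $|V| \le |U|$. For (iii), assume without loss of generality $|U|-|V| = a \ge 1$; if $U$ had at most one leaf, then applying (i) or (ii) with $U,V$ swapped would give $|V| \ge |U|$, contradicting $a \ge 1$. Therefore $U$ has at least two leaves, and the two leaf-edges $e_1,e_2$ at them satisfy that $T-e_i$ consists of an isolated $U$-vertex together with a tree on $(U \setminus \{\text{leaf}\}, V)$ of type $a-1$, giving type $(a-1, *)$.

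For (iv), if $T = P_2$ the claim is trivial; otherwise $T$ has at least three vertices and therefore a non-leaf $r$ to root at. The parent of any non-root $y$ has $y$ as a child and is either itself non-root (hence has its own parent) or equals the non-leaf root $r$, so it has degree at least $2$; consequently the parent-edge of $y$ is internal iff $y$ is a non-leaf. With $f(r) = |U|-|V| = 0$, the hypothesis becomes $f(y) \in \{\pm 1\}$ for every non-root non-leaf $y$, and since every leaf $\ell$ also satisfies $f(\ell) = \pm 1$, every child $c$ satisfies $f(c) = \pm 1$. The recursion then gives $\sum_c f(c) = f(y) - \sigma(y) \in \{-2, 0, 2\}$ for non-root non-leaf $y$ and $\sum_c f(c) = -\sigma(r) \in \{\pm 1\}$ for $r$. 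Since each $f(c) = \pm 1$, the parity of $\sum_c f(c)$ matches the parity of the number of children; hence every non-root non-leaf has an even number of children (giving odd degree $1+\text{even}$), $r$ has an odd number of children (also odd degree), and leaves have degree $1$.

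For (v), without loss of generality $|U| = |V| + 2$. A leaf in $V$ would give some $T-e$ of type $(3, *)$, so all leaves lie in $U$; consequently $\sum_{v \in V} \deg(v) = 2|V|+1$ with every $\deg(v) \ge 2$. If $|V| = 1$ this forces $T = K_{1,3}$; otherwise exactly one $v^* \in V$ has degree $3$ and every other $V$-vertex has degree $2$. Root at $v^*$: in any subtree $S_i$ below a child $u_i \in U$ of $v^*$, each $V$-vertex has degree $2$ and hence exactly one $U$-child, so the map $v \mapsto \text{child of } v$ is a bijection from $V \cap S_i$ to $(U \cap S_i) \setminus \{u_i\}$, giving $|U \cap S_i| = |V \cap S_i| + 1$. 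Since $|V| \ge 2$, some $S_j$ contains a $V$-vertex; let $v$ be a deepest such $V$-vertex in $S_j$. Its unique $U$-child $u$ must then be a leaf of $T$ (otherwise a $V$-descendant of $u$ would be deeper than $v$), so $f(v) = -1 + 1 = 0$. The parent-edge of $v$ is internal, and its removal produces components of types $|f(v)| = 0$ and $|2 - f(v)| = 2$, i.e., type $(0, 2)$. The main obstacle is exactly this structural step in (v), which requires first confining leaves to $U$, then extracting the rigid degree-sequence on $V$, and finally pinpointing an internal edge that realizes type $(0,2)$; the other parts are routine degree-sum applications or a short parity check on $f$.
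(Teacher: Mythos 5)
Your proof is correct, and its engine is the same as the paper's: your degree-sum identity $\sum_{v\in V}\deg(v)=|U|+|V|-1$ is precisely the paper's rooted-tree identity $|U|-|V|=1+\sum_{v\in V}(d(v)-2)$, and parts (i)--(iii) coincide with the published argument (including the step that forces at least two leaves into $U$ by applying (i)/(ii) with the roles of $U$ and $V$ swapped). Where you genuinely diverge is in (iv) and (v). For (iv) the paper argues by contradiction locally: it assumes a vertex $v$ of even degree, splits its incident edges into $s$ leaf-edges and $t$ internal edges $e_1,\dots,e_t$, and computes that the component of $T-e_1$ containing $v$ has even type, contradicting $(1,1)$; your global signed subtree count $f$ packages the same parity computation more systematically, since once $f(y)=\pm1$ is forced at every non-root vertex, the recursion $f(y)=\sigma(y)+\sum_c f(c)$ reads off the parity of the number of children at every vertex at once. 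For (v) the paper takes an arbitrary degree-$2$ vertex of $V$, considers its two incident edges, and case-splits using (i) and (ii) applied to the two components of the resulting cut; you instead root at the unique degree-$3$ vertex of $V$, show each branch satisfies $|U\cap S_i|=|V\cap S_i|+1$, and locate a deepest $V$-vertex whose subtree is forced to be a single edge, which exhibits the $(0,2)$ split explicitly. Both routes are sound; yours is slightly longer but more constructive, producing the edge $f$ directly rather than through a dichotomy.
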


\begin{proof}
	Let $u$ be a vertex in $U$. Consider $u$ as a root and direct every edge of $T$ away 
	from $u$. Then
	\[ 
		|U| = 1 + \sum_{v \in V(T)} d^+(v) = 1 + \sum_{v \in V} \big( d(v) - 1 \big) 
		= 1 + \sum_{v \in V} \big( d(v) - 2 \big) + |V|,
	\]
	so that
	\[ 
		|U| - |V| = 1 + \sum_{v \in V} \big( d(v) - 2 \big) \tag{*}
	\]
	and (i), (ii) follows immediately.
	
	Suppose $T$ has type $(a)$ for some $a \geq 1$. We may assume $|U| - |V| = a$. By (i) 
	and (ii), $U$ contains at least two leaves and removing their incident edges produce 
	forests, each of type $(a-1, *)$. Thus (iii) holds.
	
	Now we consider (iv). Suppose $T$ has type $(0)$ and $T-e$ has type $(1,1)$ for every 
	internal edge $e$ of $T$. If $T$ consists of only one edge then the conclusion holds. 
	Thus we assume that $T$ has at least one internal edge. Suppose that $T$ has a vertex 
	$v$ of even degree, for contradiction. Since $T$ has type $(0)$ $v$ is incident to at 
	least one internal edge. Let us say $v$ is adjacent to $s$ leaves and is incident 
	to $t$ internal edges $e_1, e_2, \ldots, e_t$ where $e_i = vu_i$ for 
	$i = 1, 2, \ldots, t$. Let us denote by $(U_i, V_i)$ the bipartition of the component 
	of $T-e_i$ containing $u_i$ such that $u_i \in U_i$. By the assumptions on $T$, 
	we know $|U_i| - |V_i|$ 
	is odd for all $i$. We consider the component of $T-e_1$ containing $v$. Its type is 
	given by
	\[ 
		\left\vert \sum_{i=2}^t |U_i| + s - \sum_{i=2}^t |V_i| - 1 \right\vert,
	\]
	which is an even number contradicting the assumption that $T-e_1$ has type $(1,1)$. 
	Hence (iv) follows.
	
	Lastly we show (v). Let $T$ be a tree with bipartition $(U,V)$ such that 
	$|U| - |V| = 2$. Suppose that for every leaf-edge $e$ of $T$,
	the forest $T-e$ has type $(1,*)$. That is, $U$ contains all the leaves. 
	From Equation $(*)$, we deduce that $V$ has 
	a unique vertex of degree 3 and all other vertices in $V$ have degree 2. If $V$ has 
	no vertex of degree 2 then $|V| = 1$ and $T$ is $K_{1,3}$. Suppose $V$ has a vertex, 
	say $v$, of degree 2. Let $e,f$ be the edges incident with $v$. If $e$ is a leaf-edge 
	then $T-f$ has type $(0,2)$. We assume that both $e$ and $f$ are internal edges. 
	The graph $T-e$ has two components, say $T_1 = (W_1, E_1)$ and $T_2 = (W_2, E_2)$ 
	where $f \in E_2$. Note that all the leaves of $T_1$ are in $U$ and all but one 
	leaves of $T_2$ are in $U$. By (i) and (ii), we have
	\[ 
		|U \cap W_1| > |V \cap W_1| \quad \text{ and } \quad |U \cap W_2| \geq |V \cap W_2|.
	\]
	Since $2 = |U| - |V| = (|U \cap W_1| - |V \cap W_1| ) + (|U \cap W_2| - |V \cap W_2| )$, 
	we have either
	\[ 
		|U \cap W_1| - |V \cap W_1| = 2 \text{ and } |U \cap W_2| - |V \cap W_2| = 0
	\]
	or
	\[ 
		|U \cap W_1| - |V \cap W_1| = 1 \text{ and } |U \cap W_2| - |V \cap W_2| = 1.
	\]
	For the former $T-e$ has type (0,2). For the latter $T-f$ has type (0,2). Thus (v) holds.
\end{proof}

\begin{proof}[Proof of Lemma \ref{lem:forestType}]
We shall divide the proof of Lemma \ref{lem:forestType} into three cases, depending on 
whether the forest $F$ has one component, two components or more than two components. 
In each case we show how to retrieve the type of $F$. We call the multiset of the types 
of $F-e$ for all edges $e$ as the \emph{type-deck} of $F$. The sub-multiset consisting of 
those types with $*$ is called \emph{\sdeck}. We can assume the following in all three 
cases. The reasoning is given below.
\begin{itemize}
	\setlength{\itemsep}{0pt}
	\setlength{\parsep}{0pt}
	\setlength{\parskip}{0pt}
	\item The types in the \sdeck contains precisely one $*$.
	\item If $F$ has more than one component, then at least one type in the \sdeck has a 
	zero as entry.
	\item No type in the \sdeck has more than one zero.
\end{itemize}
As the degree sequence of $F$ is given by the assumption, we may assume that $F$ has 
no isolated vertices. 
If the the \sdeck of $F$ contains a type with two $*$'s, then the two isolated vertices came 
from deleting an edge in a $P_2$-component of $F$, so that we can recover the type of $F$ 
by replacing the two $*$'s with a zero. Thus we may assume that the types in the \sdeck contains 
precisely one $*$.

Let $m$ be the minimum of integral entries in the types in the \sdeck. If $m>0$, then $F$ cannot 
induce a component of type $(m)$ by Lemma \ref{lem:treetypes} (iii). If $m \neq 1$, the entry 
$m$ is 
obtained from a component of type $(m+1)$, and the type of $F$ is obtained by replacing 
$(m,*)$ with a $m+1$. If $m =1$ and $F$ has more than one component, then $F$ cannot have 
a component of type $(0)$ so that again, by replacing $(m,*)$ with $m+1$ we retrieve the 
type of $F$. Hence we may assume that some types in the \sdeck have a zero.

Suppose that a type in the \sdeck has at least two zeros. Then $F$ has a component of type $(0)$. 
Among the types in the \sdeck, we choose one with a minimum number of zeros, denote this 
type by $X$. The zeros 
in $X$ came directly from $F$ and there must be a 1 and a $*$ which was obtained by removing 
a leaf-edge of a component with type $(0)$. Thus we replace $(1,*)$ by $(0)$ to retrieve the 
type of $F$. Now we may assume that no type in the \sdeck has more than one zero.

Now we prove Lemma \ref{lem:forestType}.

\textbf{Case 1.} $F$ is a tree.

If the the \sdeck has $(a,*)$ and $(a+2,*)$ for some $a$ then the only possible type of 
$F$ is $(a+1)$. Suppose $(a,*)$ is the unique type in the \sdeck. If $a \neq 1$ then by 
Lemma \ref{lem:treetypes} (iii) the type of $F$ is $(a+1)$. Suppose $(1,*)$ is the 
only type in the \sdeck. $F$ has two possibilities: $(0)$ and $(2)$. If the type-deck has $(0,2)$ 
then clearly $(2)$ is the case. If the degree sequence of $F$ is (3,1,1,1) then $F$ 
is $K_{1,3}$. If the type-deck does not have $(0,2)$ and $F$ is not $K_{1,3}$ then by 
Lemma \ref{lem:treetypes} (v) the type of $F$ is (0). This completes Case 1.

\textbf{Case 2.} $F$ has precisely two components.
By the assumptions the \sdeck has $(0,a,*)$ for some $a \geq 1$. If the \sdeck has $(0,a+2,*)$ 
also then $F$ has type $(0,a+1)$. Thus we assume $(0,a,*)$ is the unique type in the \sdeck with 
a 0. First we assume $a \geq 2$ and then consider the case $a=1$.

Suppose $a \geq 2$. If $F$ had type $(0,a-1)$, then by Lemma \ref{lem:treetypes} (iii) 
its \sdeck must have $(0,a-2,*)$ which is a contradiction. Thus $F$ has type $(0,a+1)$ 
or $(1,a)$.
If the \sdeck has $(1,a-1,*)$ then the type of $F$ cannot be $(0,a+1)$ and hence it is $(1,a)$.
If $F$ has type $(1,a)$ by Lemma \ref{lem:treetypes} (iii) the \sdeck has $(1,a-1,*)$. 
That is, $F$ has type $(1,a)$ if and only if the \sdeck has $(1,a-1,*)$, implying that we 
can decide the type of $F$ from its \sdeck.

Now we assume $a=1$ and the types in the \sdeck with a 0 are $(0,1,*)$. $F$ has one of three 
types: $(0,0)$, $(1,1)$ and $(0,2)$. If the type-deck has $(0,0,2)$ then $F$ has type $(0,2)$. 
Suppose the type-deck does not have $(0,0,2)$. If $F$ has type $(0,2)$, then every leaf-edge of 
a component of type $(2)$ produces $(1,*)$ when deleted. By Lemma \ref{lem:treetypes} 
(v), the component is $K_{1,3}$ and the \sdeck of $F$ has precisely three $(0,1,*)$. 
On the other hand, if $F$ had type $(1,1)$ then by Lemma \ref{lem:treetypes} (iii) there 
are at least four $(0,1,*)$ in the \sdeck of $F$. If $F$ had type $(0,0)$ then its \sdeck also 
have at least four $(0,1,*)$ since we assumed none of the components are $K_2$ and every 
tree has at least two leaves. Thus $(0,1,*)$ appears precisely three times in the \sdeck if 
and only if $F$ has type $(0,2)$.

Now we assume the \sdeck has at least four times $(0,1,*)$. $F$ has type $(0,0)$ or $(1,1)$. 
We may assume:
\begin{enumerate}
	\renewcommand{\labelenumi}{(\roman{enumi})}
	\item the \sdeck has only $(0,1,*)$.
	\item the type-deck consists of precisely $(0,1,*)$ and $(0,1,1)$. 
\end{enumerate}
The first assumption is because the only other possible type in the \sdeck is $(1,2,*)$, in 
which case $F$ has type $(1,1)$. For the second assertion, recall that we assumed that 
no component is $P_2$. Thus a component of type $(0)$ has an internal edge and by removing 
an internal edge from $(0,0)$ we get $(0,a,a)$. If $a \neq 1$ then $F$ has type $(0,0)$.

If $F$ has type $(1,1)$, then assumption (i) above implies for both components, one part 
of the bipartition contains all the leaves. From the proof of Lemma \ref{lem:treetypes} (i), 
all vertices in the other part have degree 2.

On the other hand, if $F$ has type $(0,0)$, then assumption (ii) above implies that for each component of $F$, every internal edge produces a forest of type $(1,1)$ when removed. Lemma \ref{lem:treetypes} (iv) asserts that all vertices of $F$ have odd degree.

Therefore, $F$ has type $(1,1)$ if $F$ has a vertex of degree 2, and $F$ has type $(0,0)$ 
otherwise. That is, we can determine the type of $F$ given all the assumptions so far. 
This completes Case 2.

\textbf{Case 3.} $F$ has more than two components.
By the assumptions after Lemma \ref{lem:treetypes} the \sdeck has a type $(0,a_1, a_2, \ldots, a_k, *)$ where $a_i > 0$ for all $i$. The question is to decide whether $F$ has a component of type $(0)$ or not. If it has, then the $(0)$ component is unique and the \sdeck has a type without zero. We replace $(1,*)$ with a $(0)$ to recover the type of $F$. If $F$ does not have a component of type $(0)$, then $F$ has type $(1, a_1, a_2, \ldots, a_k)$.

Suppose the \sdeck of $F$ has another type $(0,b_1, b_2, \ldots, b_k, *)$ where 
$\{a_i : 1 \leq i \leq k\} \neq \{b_i : 1 \leq i \leq k\}$ as multisets. Then $F$ has a 
component of type $(0)$, since otherwise the zeros in the \sdeck types come from a $(1)$ 
of $F$ and all such types must be the same up to order of entries.

Thus we assume $(0,a_1, a_2, \ldots, a_k, *)$ is the only type in the \sdeck with a 0. If the 
type of $F$ had a 0 and two distinct nonzero numbers then the \sdeck contains two distinct types 
with a 0. Hence we may assume that
\[ 
	(0,a_1, a_2, a_3, \ldots, a_k, *) 
	= (0, a-1, a, a, \ldots, a, *) \quad \text{ for some $a \geq 2$}.	
\]
The type of $F$ is either $(0,a,a,a,\ldots,a)$ or $(1,a-1,a,a,\ldots,a)$. 
Lemma \ref{lem:treetypes} (iii) implies that in the latter case the \sdeck has 
$(1,a-1,a-1,a,\ldots,a,*)$, whereas the former case cannot have the same type. 
Thus we can decide the type of $F$ from the \sdeck in Case 3.

In all cases we can decide the type of $F$ from the degree sequence and the type-deck of $F$. Therefore Lemma \ref{lem:forestType} holds and Lemma \ref{lem:recon3} follows.
\end{proof}

\section{Applications}\label{sec:appl}
In this section we prove some facts about Euler polynomials, the number of
dominating sets, and sums over spanning forests of a graph. The common theme
of these results is a very simple way of proving by just using different
representations of the bipartition polynomial.

We denote by $\mathcal{F}(G)$ the set of spanning forests of the graph $G$.
\begin{theorem}
	The Euler polynomial of a graph $G$ of order $n$ and size $m$ satisfies
	\[ 
		\mathcal{E}(G,z) = (1+z)^{m-n}(-z)^n \sum_{H\in\mathcal{F}(G)}
		\left(-\frac{1+z}{z}\right)^{k(H)}
		\left(\frac{1-z}{1+z}\right)^{\mathrm{ext}(H)}.
	\]
\end{theorem}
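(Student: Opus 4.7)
The plan is to combine two results already established in the paper: Equation (\ref{eq:euler_bip}), which expresses $\mathcal{E}(G,z)$ as a specialization of $B(G;x,y,z)$, and Theorem \ref{theo:forest_representation}, which expresses $B(G;x,y,z)$ as a sum over spanning forests. Substituting the forest representation into Equation (\ref{eq:euler_bip}) should give the target identity after routine simplification.

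Concretely, I would first evaluate $B(G;1,1,z')$ from Theorem \ref{theo:forest_representation}. At $x=y=1$, each proper-component factor $x^{|S|}y^{|T|}+x^{|T|}y^{|S|}$ becomes $2$, and $(1+x)^{\mathrm{iso}(H)}$ becomes $2^{\mathrm{iso}(H)}$. Since a spanning forest $H$ has $k(H)-\mathrm{iso}(H)$ proper (nontrivial) components, these contributions combine to give a clean prefactor of $2^{k(H)}$, so
\[
B(G;1,1,z') = \sum_{H\in\mathcal{F}(G)} 2^{k(H)}\, (z')^{n-k(H)}\, (1+z')^{\mathrm{ext}(H)}.
\]
Next I would set $z' = -2z/(1+z)$, noting the crucial simplification $1+z' = (1-z)/(1+z)$, which introduces exactly the factor $\bigl((1-z)/(1+z)\bigr)^{\mathrm{ext}(H)}$ appearing in the target.

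Finally I would multiply by the prefactor $(1+z)^m/2^n$ from Equation (\ref{eq:euler_bip}) and collect exponents. The factor $(-2z/(1+z))^{n-k(H)}$ contributes $2^{n-k(H)}$, which combines with $2^{k(H)}$ and $2^{-n}$ to cancel all powers of $2$; the remaining powers of $(1+z)$ combine as $(1+z)^{m-n+k(H)}$ together with $(-z)^{n-k(H)}$. It only remains to rewrite
\[
(-z)^{n-k(H)}(1+z)^{m-n+k(H)} = (1+z)^{m-n}(-z)^n \left(-\frac{1+z}{z}\right)^{k(H)},
\]
which holds since $(-1)^{n-k(H)} = (-1)^{n+k(H)}$. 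The only potential pitfall is sign bookkeeping in this last step, but it is straightforward; no deeper combinatorial argument is required because Theorem \ref{theo:forest_representation} already does the heavy lifting.
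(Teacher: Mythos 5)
Your proposal is correct and follows essentially the same route as the paper: substituting the forest representation of Theorem \ref{theo:forest_representation} into Equation (\ref{eq:euler_bip}) at $x=y=1$, using $|\mathrm{Comp}(H)|+\mathrm{iso}(H)=k(H)$ to obtain the factor $2^{k(H)}$, and then simplifying the powers of $2$, $(1+z)$, and $(-z)$. The final sign bookkeeping via $(-1)^{n-k(H)}=(-1)^{n+k(H)}$ checks out.
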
 
\begin{proof}
	We use the forest representation of the bipartition polynomial that is given in 
	Theorem \ref{theo:forest_representation} and Equation (\ref{eq:euler_bip}); we obtain 
	\begin{eqnarray*}
		\mathcal{E}(G,z) &=& \frac{(1+z)^m}{2^n}B\left(G;1,1,\frac{-2z}{1+z} \right) \\
		&=& \frac{(1+z)^m}{2^n} \hspace{-5pt}\sum_{H\in\mathcal{F}(G)}
			\hspace{-7pt} 2^{\mathrm{iso}(H)} \left(\frac{-2z}{1+z}\right)^{n-k(H)}
			\left(\frac{1-z}{1+z}\right)^{\mathrm{ext}(H)} \hspace{-3pt} 
			2^{k(H)-\mathrm{iso}(H)} \\
		&=& (1+z)^{m-n}(-z)^n \sum_{H\in\mathcal{F}(G)}
		\left(-\frac{1+z}{z}\right)^{k(H)}
		\left(\frac{1-z}{1+z}\right)^{\mathrm{ext}(H)}.
		\end{eqnarray*}
	For the second equality, we used the simple relation 
	$|\mathrm{Comp}(H)| + \mathrm{iso}(H) = k(H)$.
\end{proof}

The next theorem provides a representation of the Euler polynomial as a sum ranging over
subsets of the vertex set.
\begin{theorem}
	The Euler polynomial of a graph $G=(V,E)$ satisfies
	\[ 
	\mathcal{E}(G,z) = \frac{(1+z)^{|E|}}{2^{|V|}}\sum_{W\subseteq V}
	\left(\frac{1-z}{1+z}\right)^{|\partial W|}.
	\]
\end{theorem}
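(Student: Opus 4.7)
The plan is to combine the relation between the Euler polynomial and the bipartition polynomial (Equation \eqref{eq:euler_bip}) with the vertex-subset representation of $B(G;1,1,z-1)$ that comes from the generalized cut polynomial (Equation \eqref{eq:gen_cut}). Both ingredients are already in the paper, so this amounts to a one-line substitution.

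First I would start from Equation \eqref{eq:euler_bip}:
\[
\mathcal{E}(G,z) = \frac{(1+z)^{|E|}}{2^{|V|}} B\!\left(G;1,1,\frac{-2z}{1+z}\right).
\]
Next I would rewrite $B(G;1,1,\cdot)$ using Equation \eqref{eq:gen_cut} at $x=1$, which gives
\[
B(G;1,1,t-1) = \sum_{W\subseteq V} t^{|\partial W|}.
\]
So I need to express $\frac{-2z}{1+z}$ in the form $t-1$. Solving $t-1 = \frac{-2z}{1+z}$ yields $t = 1 - \frac{2z}{1+z} = \frac{1-z}{1+z}$.

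Substituting this value of $t$ into the previous identity produces
\[
B\!\left(G;1,1,\frac{-2z}{1+z}\right) = \sum_{W\subseteq V}\left(\frac{1-z}{1+z}\right)^{|\partial W|},
\]
and plugging this into the expression for $\mathcal{E}(G,z)$ gives the claimed formula. There is no real obstacle here; the only thing to be careful about is the algebraic simplification $1 - \frac{2z}{1+z} = \frac{1-z}{1+z}$, which is the step that makes the two formulas line up.
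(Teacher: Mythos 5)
Your proof is correct and follows essentially the same route as the paper: both substitute the vertex-subset form $B(G;1,1,t-1)=\sum_{W\subseteq V}t^{|\partial W|}$ into Equation (\ref{eq:euler_bip}) with $t=\tfrac{1-z}{1+z}$. The only cosmetic difference is that you cite Equation (\ref{eq:gen_cut}) at $x=1$ for this identity, whereas the paper re-derives it on the spot from the product representation of Theorem \ref{theo:prod_representation}.
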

\begin{proof}
	The result can be obtained via the multiplicative representation of the bipartition
	polynomial according to Theorem \ref{theo:prod_representation}. The substitution
	of this representation for the bipartition polynomial in Equation (\ref{eq:euler_bip})
	yields
	\begin{eqnarray*}
		\mathcal{E}(G,z) &=& \frac{(1+z)^m}{2^n}B\left(G;1,1,\frac{-2z}{1+z} \right) \\
		&=& \frac{(1+z)^m}{2^n} \sum_{W\subseteq V}  \prod_{v\in N_{G}(W)}
		\left[\left(1-\frac{2z}{1+z}\right)^{|\partial v\cap \partial W|} \right] \\
		&=& \frac{(1+z)^m}{2^n} \sum_{W\subseteq V}  
		\left(\frac{1-z}{1+z}\right)^{|\partial W|}.
	\end{eqnarray*}
	The last equality follows from the fact that $|N_{G}(v)\cap W|$ is the number of edges 
	that connect $v$ to a vertex of $W$.
	Hence when we take the product over all vertices in $N_G(W)$, then we count each edge
	in $\partial W$ exactly once.
\end{proof}

The following statement can be proven also via the principle of inclusion--exclusion.
However, our knowledge about the bipartition polynomial offers an even faster way of proof.
\begin{theorem}
	The number $d(G)$ of dominating sets of a graph $G$ satisfies
	\[ 
		d(G) = 2^n \sum_{W\subseteq V}(-1)^{|W|}
		\left(\frac{1}{2}\right)^{|N_G[W]|}.
	\]
\end{theorem}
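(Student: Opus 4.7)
The plan is to combine two substitution formulas already established in Section~\ref{sec:poly}. First I would recall Equation~(\ref{eq:dom_bip}), which expresses the domination polynomial as
\[
D(G,x) = (1+x)^n B\!\left(G;-\frac{1}{1+x},\frac{x}{1+x},-1\right),
\]
and note that the total number of dominating sets is simply $d(G) = D(G,1)$. Evaluating at $x=1$ immediately gives
\[
d(G) = 2^n\, B\!\left(G;-\tfrac{1}{2},\tfrac{1}{2},-1\right).
\]

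Next I would invoke the generalized domination representation stated just after Equation~(\ref{eq:dom_bip}), namely
\[
B(G;x,1-y,-1) = \sum_{W\subseteq V} x^{|W|} y^{|N_G(W)|}.
\]
Setting $x = -\tfrac{1}{2}$ and $1-y = \tfrac{1}{2}$ (so $y = \tfrac{1}{2}$), this becomes
\[
B\!\left(G;-\tfrac{1}{2},\tfrac{1}{2},-1\right) = \sum_{W\subseteq V} \left(-\tfrac{1}{2}\right)^{|W|} \left(\tfrac{1}{2}\right)^{|N_G(W)|}.
\]

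The only remaining step is to merge the two factors of $\tfrac{1}{2}$. Since by definition $N_G(W) = \bigcup_{w\in W} N_G(w) \setminus W$, the sets $W$ and $N_G(W)$ are disjoint and $|W| + |N_G(W)| = |N_G[W]|$. Hence
\[
\left(-\tfrac{1}{2}\right)^{|W|}\left(\tfrac{1}{2}\right)^{|N_G(W)|} = (-1)^{|W|}\left(\tfrac{1}{2}\right)^{|N_G[W]|},
\]
and multiplying through by $2^n$ yields the desired identity. There is no real obstacle here: the proof is essentially a two-line substitution once one recognizes that $d(G) = D(G,1)$ and that the exponent consolidation uses the disjointness in the definition of the open neighborhood.
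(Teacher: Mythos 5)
Your proof is correct and follows essentially the same route as the paper: both reduce $d(G)=D(G,1)$ to $2^nB\left(G;-\tfrac12,\tfrac12,-1\right)$ via Equation~(\ref{eq:dom_bip}) and then evaluate via the product representation at $z=-1$ (your citation of the generalized domination polynomial is just that evaluation already recorded in Section~\ref{sec:poly}). Your explicit final step using the disjointness of $W$ and $N_G(W)$ to get $|N_G[W]|=|W|+|N_G(W)|$ is exactly what the paper leaves implicit in ``which is equivalent to the statement.''
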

\begin{proof}
	Here we use the product representation of the bipartition polynomial
	of a simple graph. The restriction to simple graphs does not change the domination
	polynomial. 
	According to Equation (\ref{eq:dom_bip}), we obtain
	\begin{align*}
		d(G) &= D(G,1) \\
		&= 2^n B\left(G;-\frac{1}{2},\frac{1}{2},-1\right) \\
		&= 2^n \sum_{W\subseteq V}\left(-\frac{1}{2}\right)^{|W|}
		\prod_{v\in N_G(W)}
		\left[\left(\frac{1}{2}\right)\left[0^{|N_{G}(v)\cap W|}-1 \right]  +1 \right]\\
		&= 2^n \sum_{W\subseteq V}\left(-\frac{1}{2}\right)^{|W|}
		\prod_{v\in N_G(W)}
		\left(\frac{1}{2}\right) \\
		&= 2^n \sum_{W\subseteq V}\left(-\frac{1}{2}\right)^{|W|}
		\left(\frac{1}{2}\right)^{|N_G(W)|},
	\end{align*} 
which is equivalent to the statement of the theorem.
\end{proof}

\begin{theorem}
	Let $G=(V,E)$ be a graph with a linearly ordered edge set and $\mathcal{F}_0(G)$ 
	the set of all spanning forests of $G$ with external activity 0. Then
	\[ 
		\sum_{H\in\mathcal{F}_0(G)}(-2)^{k(H)} = (-1)^n 2^{k(G)}.
	\]
\end{theorem}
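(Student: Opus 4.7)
The plan is to evaluate the forest representation of $B(G;x,y,z)$ from Theorem \ref{theo:forest_representation} at the specific point $(x,y,z)=(1,1,-1)$. The crucial feature of $z=-1$ is that the factor $(1+z)^{\mathrm{ext}(H)}$ collapses the sum to exactly those spanning forests with external activity zero: the term is $0$ whenever $\mathrm{ext}(H)>0$ and equals $1$ whenever $\mathrm{ext}(H)=0$. Hence the whole sum over $\mathcal{F}(G)$ reduces to a sum over $\mathcal{F}_0(G)$.

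Next I would simplify the surviving terms. At $x=y=1$, the isolated-vertex prefactor becomes $(1+x)^{\mathrm{iso}(H)}=2^{\mathrm{iso}(H)}$, and each component factor $x^{|S|}y^{|T|}+x^{|T|}y^{|S|}$ evaluates to $2$, contributing $2^{|\mathrm{Comp}(H)|}$ overall. Using the identity $k(H)=\mathrm{iso}(H)+|\mathrm{Comp}(H)|$ (already invoked in the preceding theorem's proof), these two factors multiply to $2^{k(H)}$. The remaining factor $z^{n-k(H)}$ becomes $(-1)^{n-k(H)}=(-1)^n(-1)^{k(H)}$, and combining with $2^{k(H)}$ yields $(-1)^n(-2)^{k(H)}$. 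So
\[
B(G;1,1,-1)=(-1)^n\sum_{H\in\mathcal{F}_0(G)}(-2)^{k(H)}.
\]

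Finally I would invoke Corollary \ref{coro:k(G)}, which gives $B(G;1,1,-1)=2^{k(G)}$. Substituting and multiplying both sides by $(-1)^n$ yields the claimed identity
\[
\sum_{H\in\mathcal{F}_0(G)}(-2)^{k(H)}=(-1)^n\,2^{k(G)}.
\]

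There is essentially no obstacle beyond careful bookkeeping: the only thing to watch is that the exponent decomposition $n-k(H)$ produces the right sign and that $\mathrm{iso}(H)+|\mathrm{Comp}(H)|=k(H)$ is what turns the two different $2$-contributions into a single $2^{k(H)}$. Everything else is a direct substitution into Theorem \ref{theo:forest_representation}.
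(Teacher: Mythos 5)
Your proof is correct and follows exactly the paper's argument: substitute $x=y=1$, $z=-1$ into the forest representation of Theorem \ref{theo:forest_representation}, observe that $(1+z)^{\mathrm{ext}(H)}$ restricts the sum to $\mathcal{F}_0(G)$, combine $2^{\mathrm{iso}(H)}$ with $2^{|\mathrm{Comp}(H)|}$ via $\mathrm{iso}(H)+|\mathrm{Comp}(H)|=k(H)$, and compare with $B(G;1,1,-1)=2^{k(G)}$. The paper states this in one line; you have merely supplied the bookkeeping it leaves implicit.
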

\begin{proof}
	The statement follows immediately from the forest representation of the bipartition 
	polynomial that is given in Theorem \ref{theo:forest_representation} by substituting
	$x=1$, $y=1$, $z=-1$ in $B(G;x,y,z)$.
\end{proof}

\begin{theorem}\label{theo:biolor}
	Let $G=(V,E)$ be a simple undirected graph with $n$ vertices. The number of 
	bicolored subgraphs of $G$ with exactly $i$ isolated vertices and exactly $j$ edges 
	is given by the coefficient of $x^iz^j$ in the polynomial
	\[ 
	(2x-1)^n B\left(G;\frac{1}{2x-1},\frac{1}{2x-1},z\right).
	\]
\end{theorem}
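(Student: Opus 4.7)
The plan is to start from the bipartite representation (Theorem~\ref{theo:bip_representation}), substitute $x,y\mapsto t:=1/(2x-1)$, and then absorb the prefactor $(2x-1)^n$ so that what remains is a generating polynomial whose coefficients are manifestly the desired count.

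Under the substitution, each factor $x^{|S|}y^{|T|}+x^{|T|}y^{|S|}$ appearing in the bipartite representation collapses to $2\,t^{|S|+|T|}$. Since the proper components of a bipartite $(V,F)$ together cover exactly $n-\iso(V,F)$ vertices, the product over them reduces to $2^{|\cp(V,F)|}\,t^{\,n-\iso(V,F)}$. Multiplying by $(2x-1)^n$ and using the two identities $(2x-1)\,t=1$ and $1+t=2x/(2x-1)$, the $t$-powers telescope:
\[
(2x-1)^n\, t^{\,n-\iso(V,F)}\,(1+t)^{\iso(V,F)} = (2x-1)^{\iso(V,F)}\,\bigl(\tfrac{2x}{2x-1}\bigr)^{\iso(V,F)} = (2x)^{\iso(V,F)}.
\]
This yields the clean identity
\[
(2x-1)^n B\!\left(G;\tfrac{1}{2x-1},\tfrac{1}{2x-1},z\right) = \sum_{\substack{F\subseteq E\\(V,F)\text{ bipartite}}} 2^{\iso(V,F)+|\cp(V,F)|}\, x^{\iso(V,F)}\, z^{|F|}.
\]

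The concluding step is to interpret the weight $2^{\iso(V,F)+|\cp(V,F)|}$ combinatorially. For a spanning bipartite subgraph $(V,F)$, a proper vertex $2$-coloring is obtained by choosing independently one of two colors at each isolated vertex and one of the two bipartitions at each proper component, giving exactly $2^{\iso(V,F)+|\cp(V,F)|}$ colorings. Hence the coefficient of $x^iz^j$ counts pairs consisting of a spanning bipartite subgraph with $j$ edges and $i$ isolated vertices together with a proper $2$-coloring of its vertex set, which is precisely the number of bicolored subgraphs with $i$ isolated vertices and $j$ edges. The only real obstacle is bookkeeping the three $t$-contributions in the telescoping step; once that algebra is verified, the combinatorial reading is immediate and no further graph-theoretic input is needed.
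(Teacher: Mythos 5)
Your argument is correct and is essentially the paper's own proof: both start from the bipartite representation, substitute $x,y\mapsto 1/(2x-1)$, absorb the prefactor using $\iso(V,F)+|\cp(V,F)|=k(V,F)$ and the fact that the proper components cover $n-\iso(V,F)$ vertices, and then read off $2^{k(V,F)}$ as the number of proper $2$-colorings. The only cosmetic difference is that you keep the weight in the form $2^{\iso(V,F)+|\cp(V,F)|}$ rather than rewriting it as $2^{k(V,F)}$.
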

\begin{proof}
	An edge subset $F\subseteq E$ of $G$ induces a subgraph that can be properly
	colored with two colors if and only if $(V,F)$ is a bipartite graph. The number
	of bicolored graphs with edge set $F$ is then given by $2^{k(V,F)}$. Substituting
	$x$ and $y$ by $1/(2x-1)$ in $B(G;x,y,z)$ and multiplying the resulting expression
	with $(2x-1)^n$ yield
	\[ 
	(2x-1)^n \hspace{-14pt}\sum_{\substack{F\subseteq E \\(V,F)\text{ is bipartite}}}
	\hspace{-20pt} z^{|F|}\left(\frac{2x}{2x-1}\right)^{\mathrm{iso}(V,F)} 
	\hspace{-20pt} \prod_{(U,A)\in\mathrm{Comp}(V,F)}
	\hspace{-5pt}\left(\frac{2}{2x-1}\right)^{|U|}.
	\]
	Using the equations $\mathrm{iso}(V,F)+|\mathrm{Comp}(V,F)|=k(V,F)$ and
	\[ 
		\mathrm{iso(V,F)} + \sum_{(U,A)\in\mathrm{Comp}(V,F)} |U| = n,
	\]
	we obtain
	\[ 
	(2x-1)^n B\left(G;\frac{1}{2x-1},\frac{1}{2x-1},z\right)
	= \sum_{\substack{F\subseteq E \\(V,F)\text{ is bipartite}}}
	\hspace{-20pt} 2^{k(V,F)} x^{\mathrm{iso}(V,F)} z^{|F|},
	\]
	which proves the statement.
\end{proof}

\begin{corollary}
	Let $G$ be a simple graph of order $n$. The number of bicolored subgraphs of $G$
	without any isolated vertices is
	\[ 
		(-1)^nB(G;-1,-1,1).
	\]
\end{corollary}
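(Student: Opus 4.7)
The plan is to apply Theorem~\ref{theo:biolor} directly, extracting the desired count as a specialization of the generating polynomial. The number of bicolored subgraphs without isolated vertices equals the sum over all $j \geq 0$ of the coefficient of $x^0 z^j$ in
\[
	P(x,z) := (2x-1)^n B\left(G;\frac{1}{2x-1},\frac{1}{2x-1},z\right),
\]
since these coefficients count bicolored subgraphs with $i=0$ isolated vertices and $j$ edges.

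The key observation is that extracting the $x^0$-coefficient is achieved by setting $x=0$ in $P(x,z)$ (the expression is a genuine polynomial in $x$ once expanded, since the factor $(2x-1)^n$ clears the denominators coming from $1/(2x-1)$), and summing over $j$ then amounts to setting $z=1$. First I would compute $P(0,z)$: the prefactor becomes $(2\cdot 0-1)^n = (-1)^n$, and $1/(2\cdot 0 - 1) = -1$, so $P(0,z) = (-1)^n B(G;-1,-1,z)$. Then setting $z=1$ yields $(-1)^n B(G;-1,-1,1)$, as claimed.

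There is essentially no obstacle here beyond justifying that substitution in $x$ legitimately extracts the $x^0$-coefficient (equivalently, that the numerator always has the form $(2x-1)^n$ times a polynomial in $1/(2x-1)$ of degree at most $n$, so the final expression is polynomial in $x$). This is immediate from the definition of $B$ as a sum of monomials $x^{|W|} y^{|N(W)|} z^{|F|}$ with $|W|+|N(W)| \leq n$, which ensures the substitution yields a polynomial of degree at most $n$ in $x$. Hence the corollary follows in one line from Theorem~\ref{theo:biolor}.
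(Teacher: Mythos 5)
Your proof is correct and follows essentially the same route as the paper: both apply Theorem~\ref{theo:biolor} (the paper via the final displayed identity in its proof, you via the coefficient interpretation in the statement) and evaluate at $x=0$, $z=1$, noting that $2x-1=-1$ there so the rational substitution is harmless. Your extra remark justifying that $(2x-1)^nB(G;\tfrac{1}{2x-1},\tfrac{1}{2x-1},z)$ is genuinely a polynomial in $x$ of degree at most $n$ is a small but welcome addition that the paper leaves implicit.
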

\begin{proof}
	This follows immediately from the last line of the proof of Theorem \ref{theo:biolor}
	by substituting $x=0$ and $z=1$.
\end{proof}

\section{Conclusions and Open Problems}
The bipartition polynomial emerges as a powerful tool for proving equations in graphical
enumeration. It shows nice relations to other graph polynomials, offers a couple of
useful representations, and is polynomially reconstructible. However, there are still many
open questions for the bipartition polynomial; we consider the following ones most
interesting:
\begin{itemize}
	\item Equation (\ref{eq:planar}) gives a relation between the bipartition polynomial
	of a planar graph and its dual. However, this equation is restricted to the evaluation
	of $B(G;x,y,z)$ at $x=y=1$. Is there a way to generalize this result?
	\item The Ising and matching polynomial of a graph $G$ can be derived from the 
	corresponding polynomials of the complement $\bar{G}$. Can we calculate the bipartition
	polynomial of a graph from the bipartition polynomial of its complement?
	\item There are known pairs of nonisomorphic graphs with the same bipartition
	polynomial. However, despite all efforts by extensive computer search, we could not
	find a pair of nonisomorphic trees with coinciding bipartition polynomial. We know
	that no such pair for trees with order less than 19 exists. Is the bipartition
	polynomial able to distinguish all nonisomorphic trees?
\end{itemize}

\section{Acknowledgments}
We thank Jo Ellis-Monaghan, Andrew Goodall, Johann A. Makowsky, and Iain Moffatt -- 
the organizers of the Dagstuhl seminar \emph{Graph Polynomials: Towards a 
Comparative Theory} (2016). This excellent and fruitful workshop initiated the cooperation of
the authors of this paper.

\printbibliography

\end{document}